\newtheorem{theorem}{Theorem}[section]
\newtheorem{proposition}[theorem]{Proposition}
\newtheorem{lemma}[theorem]{Lemma}
\newcommand{\R}{\mathbb{R}}
\newcommand{\RP}{\mathbb{RP}}
\newcommand{\Z}{\mathbb{Z}}
\theoremstyle{definition}
\newtheorem{definition}[theorem]{Definition}
\title{Families of short cycles on Riemannian surfaces}
\author{Yevgeny Liokumovich}
\begin{document}

\maketitle

\begin{abstract}
Let $M$ be a closed Riemannian surface
of genus $g$.
We construct a family 
of 1-cycles on $M$ that represents a 
non-trivial element of the k'th homology group of 
the space of cycles and such that the mass of each cycle is bounded above by
$C \max\{\sqrt{k}, \sqrt{g}\} \sqrt{Area(M)}$.
This result is optimal up to 
a multiplicative constant.
\end{abstract}

\section{Introduction}
\label{sec:Introduction}

Let $M$ be a closed Riemannian 2-dimensional manifold and
let $Z_1(M,\Z_2)$ denote 
the space of mod 2 flat 1-cycles in $M$.
Let $Z_1^0$ denote the connected component of $Z_1(M,\Z_2)$
consisting of all null-homologous cycles in $M$.
It follows from the work of Almgren \cite{Almgren1962}
that $Z_1^0$ is weakly homotopy equivalent
to the Eilenberg-MacLane space $K(\Z_2,1) \simeq \RP^{\infty}$.
We say that a family of cycles $f: \RP^k \rightarrow Z_1^0$
is a $k$-sweepout if it represents the non-zero element
of the $k$'th homology group $H_k(Z_1^0,\Z_2) \cong \Z_2$.

Here is the main result of this paper.

\begin{theorem} \label{main}
Let $M$ be a 2-dimensional closed 
Riemannian manifold of genus $g$.
For each $k$ there exists a $k$-sweepout
$\mathcal{Z}_k = \{z_t\}_{t\in \RP^k}$ of $M$,
such that for each $t \in \RP^k$ the mass of $z_t$ 
is bounded above by
$1600 \max\{\sqrt{k}, \sqrt{g}\} \sqrt{Area(M)}$.
\end{theorem}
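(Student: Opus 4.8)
The plan is to present $\RP^k$ as the projectivization of the space of real polynomials of degree at most $k$ in one variable, and to build $\mathcal{Z}_k$ by a ``polynomial slicing'' of $M$ along a suitable real function, in the spirit of Guth's minimax constructions. A single slicing function inevitably has a level set of length $\gtrsim \mathrm{width}(M)$, so the improvement from $k$ to $\sqrt{k}$ (and the appearance of $\sqrt{g}$) must come from first decomposing $M$ into many small controlled pieces and slicing each of them separately; the polynomial then selects which pieces to fill and by how much.

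First I would establish the geometric input. Set $N=C_0\max\{k,\,g+1\}$ and $A=\mathrm{Area}(M)$. The claim is that $M$ can be cut along a finite $1$-complex $\Gamma$ with $\mathrm{length}(\Gamma)\le C_1\sqrt{NA}$ so that $M\setminus\Gamma$ is a disjoint union of open topological disks $\Omega_1,\dots,\Omega_n$ with $n\le N$ and $\mathrm{Area}(\Omega_i)\le A/N$ for all $i$. I would build $\Gamma$ in two stages: cut all the handles along a curve system of total length $\le C\sqrt{gA}$ so the result is planar; then refine each planar piece into disks of area $\le A/N$ by iterated area-balanced bisection, using that a Riemannian disk $D$ has a relative $1$-sweepout by arcs of length $\le C\sqrt{\mathrm{Area}(D)}$ (its relative width), hence a bisecting arc of that length splitting it into two sub-disks each of area $\le \mathrm{Area}(D)/2$; summing the geometric series $\sum_j 2^j\sqrt{A/2^j}$ over $\log_2 N$ rounds keeps the total length $\le C_1\sqrt{NA}$. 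Along the way I would record for each $\Omega_i$ a relative $1$-sweepout $\{c^i_t\}_{t\in[0,1]}$ with $c^i_t=\partial V^i_t$ for an increasing family of regions $V^i_t\subseteq\Omega_i$ (from $\emptyset$ to $\Omega_i$) satisfying $\mathrm{mass}(c^i_t)\le C\sqrt{\mathrm{Area}(\Omega_i)}\le C\sqrt{A/N}$.

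Next I would construct the family and estimate mass. Define $\tilde\phi\colon M\to[0,n]$ by $\tilde\phi|_{\Omega_i}(x)=(i-1)+\sigma_i(x)$ with $\sigma_i(x)=\inf\{t:x\in V^i_t\}$ (so $\tilde\phi$ may jump across $\Gamma$), and for $[a]=[a_0:\dots:a_k]\in\RP^k$ put $P_{[a]}(s)=\sum_{j=0}^k a_j s^j$ and
\[
z_{[a]} \;=\; \partial\bigl\{\,x\in M : P_{[a]}(\tilde\phi(x))<0\,\bigr\},
\]
the boundary in the sense of $\Z_2$ flat chains. Since $P_{[a]}$ and $-P_{[a]}$ cut out complementary regions with the same boundary, this descends to a map $\mathcal{Z}_k\colon\RP^k\to Z_1(M,\Z_2)$, and each $z_{[a]}$ is null-homologous because it is literally a boundary, so $\mathcal Z_k$ lands in $Z_1^0$. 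As $P_{[a]}$ has at most $k$ real roots, $\{P_{[a]}\circ\tilde\phi<0\}$ meets each $\Omega_i$ in the union of at most $m_i$ intervals of $\sigma_i$-values, where $\sum_i m_i\le k$ counts roots, so $\supp(z_{[a]})$ is contained in $\Gamma$ together with at most $k$ of the curves $c^i_t$, whence
\[
\mathrm{mass}(z_{[a]}) \;\le\; \mathrm{length}(\Gamma) + k\cdot\max_i\max_t \mathrm{mass}(c^i_t) \;\le\; C_1\sqrt{NA}+Ck\sqrt{A/N}\;\le\;C_2\sqrt{NA},
\]
which has the asserted form $\mathrm{const}\cdot\max\{\sqrt k,\sqrt g\}\sqrt{A}$ (and one must check, using that $\tilde\phi$ has no positive-area level set and treating parameters where roots enter, leave, or collide, that $[a]\mapsto z_{[a]}$ is continuous in the flat topology).

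Finally I would verify that $\mathcal Z_k$ is a $k$-sweepout, i.e. $\mathcal Z_k^{\,*}\lambda^k\ne 0$ for the generator $\lambda$ of $H^1(Z_1^0,\Z_2)$. Using the standard criterion that $f$ is a $k$-sweepout iff for $k$ points $x_1,\dots,x_k$ in general position the set $\{p:x_l\in f(p)\ \forall l\}$ is finite of odd cardinality, I pick $x_1,\dots,x_k\in M\setminus\Gamma$ at which $\tilde\phi$ is a submersion with distinct values $t_l=\tilde\phi(x_l)$. Off a lower-dimensional bad set of $[a]$'s (those for which some $t_l$ is a multiple root), $x_l\in z_{[a]}$ holds exactly when $P_{[a]}(t_l)=0$, i.e. $\sum_j a_j t_l^{\,j}=0$; these $k$ linear conditions on $(a_0,\dots,a_k)$ form a $k\times(k+1)$ Vandermonde system of rank $k$, so the solution locus in $\RP^k$ is a single, transversely cut point — odd cardinality, hence $\mathcal Z_k^{\,*}\lambda^k\ne 0$. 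The main obstacle is the decomposition of the second step, and specifically obtaining the exponent $\tfrac12$ in $\mathrm{length}(\Gamma)\le C\sqrt{NA}$: this forces the bisections to be essentially area-halving (a merely $(1/3,2/3)$-balanced cut degrades the exponent) and forces one to treat the ``thin'' parts of $M$ — long tubes and tentacles, where diameters are uncontrolled — separately from the genuinely two-dimensional ``thick'' parts, slicing the former by short transverse curves so that they enter only the $\Gamma$-budget while chopping the latter into round-ish disks. Everything downstream of that lemma is either the elementary polynomial/Vandermonde bookkeeping above or routine flat-topology continuity.
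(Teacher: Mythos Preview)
Your high-level architecture---decompose $M$ into roughly $N=\max\{k,g\}$ pieces of area $\le A/N$ separated by a graph $\Gamma$ of total length $\lesssim\sqrt{NA}$, sweep each piece by short relative cycles, then assemble a $k$-sweepout via polynomial slicing---matches the paper's. Your polynomial construction and the paper's truncated symmetric product $TP^k(S^1)$ are two packagings of the same idea, and your mass estimate $\mathrm{length}(\Gamma)+k\cdot C\sqrt{A/N}$ is exactly the estimate the paper makes in the final section.

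The gap is in the decomposition, and it is not where you locate it. Your first stage, ``cut all the handles along a curve system of total length $\le C\sqrt{gA}$,'' is asserted without argument, and it is not a standard fact: for a general Riemannian metric there is no obvious reason a cut system reducing $M$ to a planar surface should have total length $O(\sqrt{gA})$ (a pants decomposition has $3g-3$ curves, and known bounds on individual curve lengths do not sum to this). Your second stage takes as a black box that every Riemannian planar piece has relative width $\le C\sqrt{\mathrm{Area}}$; this is true (it is essentially the $g=0$, $k=1$ case of the theorem, i.e.\ Balacheff--Sabourau for the sphere), but it is not elementary, and you are effectively assuming a large chunk of what is to be proved.

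The paper circumvents both issues by a single device you do not mention: the \emph{conformal} hyperbolic structure together with the length-area method. One uniformizes $(M,h)$ to a hyperbolic surface $(M,h_0)$, takes Buser's thin--thick tessellation $\mathcal T$ of $(M,h_0)$ into $\lesssim g$ polygons of bounded hyperbolic geometry, and then uses the conformal invariance of $|\nabla f|^2\,dV$ (coarea plus Cauchy--Schwarz) to produce, inside any $h_0$-annulus of controlled $h_0$-area, a level curve that is short in the \emph{original} metric $h$. This yields directly (i) the width bound for any region covered by $\le 40$ polygons of $\mathcal T$, via an area-shrinking induction whose cuts are level sets of $h_0$-distance functions, and (ii) the covering of $M$ by such regions with total boundary length $\lesssim\sqrt{\max\{g,k\}}\sqrt{A}$, by greedily peeling off $h_0$-balls of the right $h$-area rather than by recursive bisection. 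In particular your worry that ``a merely $(1/3,2/3)$-balanced cut degrades the exponent'' does not arise: the paper never recursively halves to build $\Gamma$, it removes pieces of approximately the target area one at a time, and the $\sqrt{N}$ comes from Cauchy--Schwarz on $\sum\sqrt{\mu(V_i)}$.

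Your closing remark about treating thin parts separately is the right instinct, but the point is that ``thin'' and ``thick'' should be read in the conformal hyperbolic metric $h_0$, not in $h$; the length-area lemma is precisely what lets hyperbolic thin--thick information control $h$-lengths. Without that bridge, your stage-one handle-cutting claim has no proof.
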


$k$-sweepouts have been studied by Gromov in \cite{Gromov1988},
\cite{Gromov2003} and \cite{Gromov2009} and
by Guth in \cite{Guth2008}.
More recently, in \cite{MarquesNeves2013} Marques and Neves used 
$k$-sweepouts to prove existence of infinitely many
minimal hypersurfaces in manifolds of positive Ricci curvature.
In \cite{Glynn-AdeyLiokumovich} Glynn-Adey and the author 
obtained upper bounds for volumes of these hypersurfaces.
 
In the case of surfaces Balacheff and Sabourau \cite{BalacheffSabourau2010}
constructed a sweepout of $M$ by 1-cycles of mass bounded
by $C \sqrt{(g+1)Area(M)}$.
This corresponds to the case $k=1$ of Theorem \ref{main}.
Different proofs of their result, improving the value of an upper bound
for the constant $C$, were given in \cite{Liokumovich2013},
\cite{Glynn-AdeyLiokumovich}.
The proof of Balacheff and Sabourau relies 
on the estimate of Li and Yau \cite{LiYau1982}
for the first eigenvalue of the Laplacian.
In this paper we give an elementary construction
of $k$-sweepouts using only the thin-thick decomposition of
hyperbolic surfaces and the length-area method. 
 
The upper bound in Theorem \ref{main} is optimal up 
to a constant. Brooks constructed examples of 
closed hyperbolic surfaces of arbitrarily large genus
such that any $1$-sweepout of $\Sigma_g$ must contain a cycle
of mass greater than $c \sqrt{g}$ for some 
$c>0$. On the other hand, Gromov showed in \cite{Gromov1988}
that a $k$-sweepout of the round $n$-sphere by $(n-1)$-cycles 
must contain a cycle of mass greater than $c k^{\frac{1}{n}}$
for a constant $c>0$.
To prove this Gromov observed that if $\{U_i\}$ is
a collection of $k$ disjoint measurable subsets in 
 $M$ and $z_t$ is a $k$-sweepout, then there will be
 a cycle $z_t$ that separates each $U_i$ into two subsets of 
 equal area.
 Gromov's arguments were generalized
 and extended by Guth in \cite{Guth2009}.
 In that paper Guth proves nearly optimal lower and upper bounds
 for all homology classes of the space of mod 2 $m$-dimensional
 cycles on the $n$-dimensional round sphere.

In \cite{Gromov1988} Gromov suggested that finding bounds on the maximal 
mass of a cycle in an optimal $k$-sweepout 
can be thought of 
as a non-linear analogue of the spectral problem on $M$.
Arguments in our paper, especially the use of
the length-area method, were inspired by and are similar to
the estimates for the eigenvalues of the Laplace operator
on Riemannian manifolds in the works of Hersch \cite{Hersch},
Yau \cite{Yau1975}, Yang and Yau \cite{YangYau},
Korevaar \cite{Korevaar1993}, Gromov \cite{Gromov1993}, 
Grigoryan, Netrusov and Yau \cite{Grigoryan2004}, Colbois and Maertens
\cite{ColboisMaerten}, and Hassannezhad \cite{Hassanezhad}.

\vspace{0.1in}

\textbf{Acknowledgements.} 
I am grateful to Misha Gromov for explaining the 
connection between $k$-sweepouts and spectral problems
and for suggesting methods of Hersch \cite{Hersch} and Korevaar \cite{Korevaar1993}
for the kind of problems considered in this paper.
I would like to thank my advisers Alexander Nabutovsky and Regina Rotman
for many very valuable discussions and for important comments on 
the first draft of this paper.
I am grateful to anonymous referees for careful reading of the article and excellent suggestions that
helped to improve the exposition.

The author was partially supported by the Queen Elizabeth II/ Israel Halperin Graduate Scholarship.

\section{Outline of the proof} \label{outline}
Let $M$ be a closed surface of area $1$. 
Suppose we can cover $M$ by $k$ sets $U_i$ with piecewise smooth boundary
and disjoint interiors,
each of area $\sim \frac{1}{k}$, and such that the boundary length
of each set is $\sim \frac{1}{\sqrt{k}}$.
Assume furthermore that for each $U_i$ there exists
a $1$-sweepout of $U_i$ by cycles of length at most $\sim \frac{1}{\sqrt{k}}$.
We can now sweep out all of $M$ as follows.
First we sweep out $U_1$, starting on a $0$-cycle and
ending on the boundary of $U_1$. We hold cycle
$\partial U_1$ fixed and start adding to it
a sweepout of $U_2$ and so on.
Eventually cycles in the boundaries of $U_i$'s
will overlap and cancel out.

Denote this sweepout of $M$ by $z_t$ and consider
a cycle $z = \sum_{i=1}^k z_{t_i}$,
where $\{ t_i \}$ are $k$ different moments of time.
Each $z_{t_i}$ can be decomposed into two parts:
one that lies in $\bigcup \partial U_i$
and one that is contained in only one of the sets $U_i$
and has mass at most $\sim \frac{1}{\sqrt{k}}$.
Since the cycles are mod 2, the parts that overlap 
in $\bigcup \partial U_i$ will cancel out,
so $mass(z) \lesssim \sqrt{k}$.
There exists a $k$-sweepout
of $M$ that consists of cycles like $z$ and therefore
satisfies the desired upper bound.

The idea described above was successfully used by Gromov and Guth
to bound volumes of $k$-sweepouts in various contexts.

If $M$ is a Riemannian 2-sphere then one can find 
a covering of $M$ by $k$ sets as described above.
This can be done using the length-area method as described in 
Section \ref{sec: good regions}.
To construct a $1$-sweepout of $U_i$'s we use the following
idea from the work of Balacheff and Sabourau \cite{BalacheffSabourau2010}.
First, we find a relative $1$-cycle $c_1$
subdividing $U_i$ into two sets $U_i^1$ and $U_i^2$ each of area
$\leq r Area(U_i)$ for some fixed $r \in (0,1/2)$, such that the length of $c_1$
is bounded above by $\sim \sqrt{Area(U_i)}$. 
Let $W_1(U)$ denote the maximal length of a relative cycle in 
an ``optimal" sweepout of $U$ (precise definition will be given
in section \ref{preliminaries}).
Given a sweepout
of each of $U_i^1$ and $U_i^2$ by relative cycles we can assemble them
into a sweepout of $U_i$ by attaching pieces of $c_1$ to some of these
cycles. It follows then that $W_1(U_i)$ is bounded above
by $\sim \max \{ W(U_i^1), W(U_i^2)\} + \sqrt{Area(U_i)}$.
We can repeat this process and subdivide $U_i^j$ into
two subsets $U_i^{j,1}$ and $U_i^{j,2}$.
After $n$ iterations we obtain
 $W_1(U_i) \lesssim \max \{U_i^{j_1, ..., j_n} \} + 
\sum_{i=0}^{n-1} r^{i} \sqrt{Area(U_i)}$ and the
areas of sets $U_i^{j_1, ..., j_n}$ are at most
$r^n Area(U_i)$. Since the geometric series $\sum_{i=0}^{n-1} r^{i}$
converges as $n \rightarrow \infty$ the above argument reduces the problem
of bounding the $1$-width of $U_i$ to a problem of bounding
the $1$-width of a subset $U_i^{j_1, ..., j_n} \subset U_i$ of arbitrarily small area. To accomplish this we cut $U_i^{j_1, ..., j_n}$
into pieces which are $(1+\epsilon)$-bilipschitz
to open subsets of Euclidean plane and apply an 
argument of Guth \cite{Guth2007}.

However, if the surface has genus greater than $k$ the above
argument may not work. It may happen that
every collection of $k$ open sets of approximately equal areas 
that cover $M$ have large length of the boundary and some of these open sets do not admit a sweepout by short cycles. This happens, for example, for hyperbolic surfaces
of high genus constructed by Brooks \cite{Brooks1986}.

Instead we will first cover $M$ by $\sim g$ `good regions' 
$V_i$ (where $g$ is the genus). These regions can have arbitrary areas,
but they have the following nice properties:
\begin{enumerate}
\item There exists a sweepout of $V_i$
by relative 1-cycles of length at most $\sim \sqrt{Area(V_i)} $ 
\item We can subdivide 
$V_i$ into $m$ (where $m$ is any positive integer)
subsets of approximately equal areas, such that
the length of the union of their boundaries
is at most $\sim \sqrt{m} \sqrt{Area(V_i)} +l(\partial V_i)$
\end{enumerate}
So for our purposes these good regions are as good as
subsets of the sphere. 
We will then subdivide them into subsets of the right area.
The value of $m$ that we choose for each region
$V_i$ will depend on $k$ and the area of $V_i$.

To obtain these good regions
we use uniformization theorem and the length-area method.
By uniformization theorem a surface of genus $g \geq 2$
is conformally equivalent to a hyperbolic surface.
P. Buser used thin-thick decomposition 
to construct a tessellation of a hyperbolic surface
by polygons of approximately equal areas with some special properties.
The thin part of the surface in this tessellation is covered by long and narrow rectangles
and the thick part is covered by triangles that are close to equilateral 
triangles. For us the most important thing about this tessellations
is that every polygon contains at most $c$ other polygons
in its $1/2$-neighbourhood. Our good regions
will be those that are covered by at most $c$
polygons from this tessellation. 

To control lengths of the boundaries of 
good regions we observe that if 
a family of concentric geodesic circles (i.e. level sets of the distance function)
on the hyperbolic surface
(conformal to our surface $M$) covers a set of small area,
when measured with the original (non-hyperbolic) metric,
then some of these circles must be short in the original metric.
This is a classical observation sometimes called the length-area method
(see Section \ref{sec: length-area}). We use it to find 
short cycles on $M$ in $1/2-$neighbourhood of a polygon
from the hyperbolic tessellation.
Actually, the length of the boundary of each individual
good region in our construction may be comparatively long, 
but the total length
of the union of their boundaries will be at most
$\sim \max\{\sqrt{g},\sqrt{k}\}$. Moreover, after
we subdivide each good region into smaller parts 
using property (2) above so that area
of each part is at most $\sim \frac{1}{k}$, the total 
length of the union of the boundaries of all parts
will still be at most $\sim \max\{\sqrt{g},\sqrt{k}\}$.
This is sufficient to bound lengths of
$k$-sweepouts using the argument described above.

Here's the plan of the paper. In Section \ref{preliminaries}
we define $k$-sweepouts and a technical notion of monotone
sweepouts. These sweepouts have a nice property that it is
easy to glue 
two short monotone sweepouts of adjacent regions into a
short monotone sweepout of their union.
In Section \ref{sec: length-area} we use the length-area method to prove a key lemma for finding
subsets of $M$ with small length of the boundary.
In Section \ref{tessellation} we describe Buser's tessellation 
$\mathcal{T}$ of a hyperbolic surface by quadrilaterals
and triangles.
In Section \ref{sec: Guth} we describe Guth's construction
of sweepouts of open subsets of $\R^2$.
We use this result as the base of induction
in the proof that a subset of $M$ of very small area
admits a sweepout by short cycles.
In Section \ref{sec: good regions} we prove that if a
subset $U$ of $M$ can be covered by at most $40$
elements of $\mathcal{T}$ then it admits a sweepout
by cycles of length at most $\sim \sqrt{Area(U)}$.
In Section \ref{sec: covering} we construct a covering of $M$
by sets that are contained in at most $40$ elements of $\mathcal{T}$
and have area at most $\frac{Area(M)}{k}$ and finish the proof of the theorem.

\section{Preliminaries} \label{preliminaries}

For the definition of the space of mod 2 cycles with flat metric
we refer the reader to \cite{Federer1969} or a concise description
in \cite[Section~2]{BalacheffSabourau2010}, which will be sufficient 
for our purposes. 

In \cite{Almgren1962} Almgren constructed maps  
from homotopy groups of the integral
cycle space $\pi_k(Z_{m}(M^n, \Z);0)$
to homology groups of the manifold $H_{k+m}(M^n,\Z)$
and proved that these maps are isomorphisms for all 
non-negative integers $k$ and $m$.
Almgren's proof works for $\Z_2$ coefficients as well.
For a surface $M$ we have an isomorphism 
$\pi_k(Z_{1}(M, \Z_2);0) \cong H_{k+1}(M,\Z_2)$.
Since homology groups of $M$ are zero for $k>1$, the connected component $Z_1^0$ of $Z_{1}(M, \Z_2)$,
$0 \in Z_1^0$, is weakly homotopy equivalent to 
the Eilenberg-MacLane space $K(\Z_2,1)\simeq \RP^{\infty}$.

For a surface $M$ Almgren's map 
$F_{A}:\pi_1(Z_{1}(M, \Z_2),0) \rightarrow H_2(M,\Z_2)$ 
is defined as follows.
Consider a loop $z_t: S^1 \rightarrow Z_{1}(M, \Z_2)$ representing
some class of the fundamental group and pick a fine subdivision 
$\{t_1,..., t_n\}$  of $S^1$. For each $t_i$ cycle $z_{t_i}$ can 
be approximated by a cycle that consists of a 
finite collection of Lipschitz circles. 
If $c_i$ and $c_{i+1}$ are two such approximations of
$z_{t_i}$ and $z_{t_{i+1}}$ respectively,
we can find an area minimizing chain $A_i$
with $\partial A_i = c_i - c_{i+1}$
We can then assemble chains $A_i$ into a 2-cycle 
that represents an element of $H_2(M,\Z_2)$.
It turns out that if the subdivision and approximations
are fine enough then the 2-cycle will represent the same 
element in the homology independent
of the particular subdivision and approximations.

We say that $\{z_t\}_{t \in \RP^1}$ is a sweepout (or 1-sweepout) of $M$ if
loop $\{z_t\}$ is non-contractible
in $Z_{1}(M, \Z_2)$, i.e. $F_A([z_t])\neq 0$.
More generally, we say that $\{z_t\}_{t \in \RP^k}$
is a $k$-sweepout if 
it represents the non-zero element of $H_k(Z_1^0)\cong \Z_2$.
The ring structure of $H^*(Z_1^0,\Z_2) \cong \Z_2[a]$,
where $a$ is the non-zero class of $H^1(Z_1^0,\Z_2)$,
provides a useful criterion for when a family is a $k$-sweepout.
We have that map $f: \RP^k \rightarrow Z_1^0$
is a $k$-sweepout if and only if the pull-back $f^*(a^k) \neq 0$.


We will frequently need to consider sweepouts
of manifolds with boundary.
In this case we consider the space of cycles
relative to the boundary and all definitions above
carry over to this setting.

The 1-sweepouts that we construct in this paper are 
nicer than an arbitrary 1-sweepout. After a small perturbation
different cycles in 
it will not intersect each other and one can turn them
into level sets of a function $f:M \rightarrow \R$.
We summarize this in the following definition. 

\begin{definition} \label{monotone}
Let $M$ be a Riemannian surface (possibly with boundary).
Let $int(M)$ denote the interior of $M$.
We say that $z_t$ is a monotone sweepout if $z_t$
is a sweepout of $M$ and for each $t$ cycle
$z_t$ can be represented by a finite collection
of points and piecewise smooth simple closed curves,
which satisfy the following condition. 
There exists a family of nested subsets $A_t \subset M$,
$A_{t'} \subset A_t$ for all $t'<t$, such that $z_t$ contains
$\partial A_t \setminus \partial M$ and is contained in $\partial A_t$.
\end{definition}

Since the cycles are nested and they can be glued into
the fundamental class of $M$, it follows that
$A_0$ is collection of points and $A_1$ is all of $M$.
Below we use this property to concatenate
sweepouts of two adjacent regions.

\begin{lemma} \label{concat}

Let $M$ be a Riemannian surface, possibly with boundary, and 
let $\gamma$ be a relative 1-cycle composed of finitely many
piecewise smooth closed curves that have not self-intersections
or pairwise intersections and
separate $M$ into $M_1$ and $M_2$. Suppose there exist
monotone sweepouts of $M_1$ and $M_2$ of length
at most $L$. Then there exists a monotone sweepout
of $M$ by cycles $z_t$, such that we can decompose
$z_t$ as a sum of $1$-chains $z^1 _t + z^2 _t$,
where $l(z^1_t) \leq L +\epsilon$ and $z^2 _t$ is 
contained in $\gamma$.
\end{lemma}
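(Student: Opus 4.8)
The plan is to run the monotone sweepout of $M_1$ first, then splice the monotone sweepout of $M_2$ onto it, using the curve $\gamma$ as a "fixed backdrop" that carries the homological weight during the transition. Let me think about the structure.

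We have $M = M_1 \cup_\gamma M_2$. A monotone sweepout of $M_1$ is given by nested sets $A^1_s \subset M_1$, $s \in [0,1]$, with $A^1_0$ a finite set of points, $A^1_1 = M_1$, and the sweepout cycle at time $s$ being $\partial A^1_s \setminus \partial M_1$ (up to pieces inside $\partial M_1$); similarly nested sets $A^2_s \subset M_2$.

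Now here's the subtlety: as $s \to 1$ in the $M_1$-sweepout, $A^1_s \to M_1$, so the cycle $\partial A^1_s$ converges to $\partial M_1$, part of which is $\gamma$ and part of which is $\partial M \cap \bar M_1$. When we then start the $M_2$-sweepout, we want $A_t = M_1 \cup A^2_s$, so the boundary cycle is $\partial(M_1 \cup A^2_s) \setminus \partial M$. The piece of $\gamma$ separating $A^2_s$ from $M_2 \setminus A^2_s$ appears in this boundary, and the rest of $\gamma$ does not. So throughout both halves of the concatenated sweepout, the cycle is: (a portion of $\gamma$, which is contained in $\gamma$) plus (the "genuine" sweepout cycle inside the open region being swept, which has length $\le L$). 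This gives exactly the decomposition $z_t = z^1_t + z^2_t$ with $l(z^1_t) \le L$ and $z^2_t \subset \gamma$ — modulo the $\epsilon$, which I expect to come from the perturbation needed to make the curves piecewise smooth and simple at the gluing time and at times when the sweepout cycle touches $\gamma$ transversally rather than lying along it.

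I would carry this out in the following steps. First, define the nested family for $M$ explicitly: for $t \in [0,1/2]$ set $A_t = A^1_{2t}$; for $t \in [1/2,1]$ set $A_t = M_1 \cup A^2_{2t-1}$. Check $A_0$ is a finite point set, $A_1 = M$, and monotonicity $A_{t'} \subset A_t$ for $t' < t$ (the only non-obvious inclusion is at $t = 1/2$, where $A^1_1 = M_1 \subset M_1 \cup A^2_0 = M_1$, fine). Second, verify that $z_t := \partial A_t \setminus \partial M$ satisfies $\partial A_t \setminus \partial M \subset z_t \subset \partial A_t$ as in Definition \ref{monotone}, and that it can be represented by finitely many disjoint piecewise smooth simple closed curves — this needs a small perturbation of $\gamma$ relative to the sweepout cycles to remove intersections, which is where $\epsilon$ enters and where I'd need to invoke a transversality/general-position argument (curves can be made to meet $\gamma$ transversally in finitely many points, then rounded). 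Third, bound the length: $z_t$ decomposes into the part lying on $\gamma$ (call it $z^2_t$, trivially contained in $\gamma$) and the part lying in $int(M_1)$ or $int(M_2)$ (call it $z^1_t$); the latter is, up to the perturbation, a cycle from the monotone sweepout of $M_1$ or $M_2$, hence $l(z^1_t) \le L + \epsilon$. Fourth, confirm this family is genuinely a sweepout of $M$ — i.e. non-trivial in $H_1(Z_1(M;\Z_2))$ — by checking that the chains $A_t$ assemble to the fundamental class of $M$; since the $A^i_s$ assemble to $[M_i]$ and $[M_1] + [M_2] = [M]$, Almgren's correspondence gives $F_A([z_t]) = [M] \ne 0$.

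The main obstacle I anticipate is not the homological bookkeeping but the \emph{regularity at the seam}: ensuring that the concatenated family is continuous in the flat topology across $t = 1/2$ (the $M_1$-cycle must limit onto $\partial M_1$, and the $M_2$-cycle must start from $\partial M_2$, and these must agree as relative cycles along $\gamma$), and that after perturbation every $z_t$ really is a \emph{disjoint} union of \emph{simple} piecewise-smooth curves rather than an immersed object. Handling the times $t$ where the interior sweepout cycle becomes tangent to or crosses $\gamma$ — so that $z^1_t$ and $z^2_t$ share endpoints — requires the rounding argument and is the source of the unavoidable $\epsilon$ in the length bound.
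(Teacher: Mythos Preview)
Your proposal is correct and follows essentially the same approach as the paper: define the nested family by $A_t = A^1_{2t}$ on $[0,1/2]$ and $A_t = A^1_1 \cup A^2_{2t-1}$ on $[1/2,1]$, perturb so the sweepout cycles meet $\gamma$ nicely (this is the source of the $\epsilon$), and decompose each $z_t = \overline{\partial A_t \cap int(M)}$ into the piece lying on $\gamma$ and the interior piece coming from one of the two original sweepouts. Your discussion of continuity at $t=1/2$ and of the transversality/rounding argument is more explicit than the paper's, but the substance is identical.
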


\begin{proof}
By definition of a monotone sweepout for each $i = 1,2$ there exists a
family $A^i_t$ of nested sets with 
$int(M_i) \cap \partial A^i _t \subseteq z^i_t \subseteq  \partial A^i _t$.
After a small perturbation that keeps $A^i _t$'s
nested and increases
lengths of cycles by at most $\epsilon$ we can assume that 
$\partial A^i _t$ will intersect $\gamma$ in a (possibly empty) finite collection of
arcs and closed curves $I^i_t$ with $I^i_t \subseteq I^i_{t'}$ if $t \leq t'$.

Define $A_{t} = A^1 _{2t}$ for $t\in [0,\frac{1}{2}]$
and $A_t = A^1_1 \cup A^2_{\frac{t+1}{2}}$ for $t \in (\frac{1}{2},1]$.
We define sweepout $z_t= \overline{\partial A_t \cap int(M)}$.
For $t \leq \frac{1}{2}$ each cycle $z_t$ can be decomposed into 
a chain that is contained in $z^1_{2t}$ and a 
chain $I^1_t\subset \gamma$. 
For $t > \frac{1}{2}$  cycle $z_t$ can be decomposed into 
a chain that is contained in $z^2_{\frac{t+1}{2}}$ and a 
chain $\gamma \setminus I^2_t$.
\end{proof}

\section{Length-area method} \label{sec: length-area}

Given a closed Riemannian surface $(M,h)$ 
by uniformization theorem there exists a conformal
diffeomorphism $\phi: (M,h) \rightarrow (M,h_0)$
from $(M,h)$ to a surface of constant curvature $(M,h_0)$. 
This conformal equivalence will play a key role
in our construction of parametric sweepouts.
For a subset $U \subset M$ we will write $\mu_0(U)$
to denote its area with respect to metric $h_0$
and  $\mu(U)$ to denote its area with
respect to $h$.
Similarly,
we will write $d(x,y)$, $B(x,r)$ and 
$\nabla$  to denote distance 
	 function, closed metric ball of 
	 radius $r$ about $x$, and gradient with respect 
	 to $h$ and we let $d^0(x,y)$,
	 $B^0(x,r)$, $\nabla^0$ denote the corresponding 
	 quantities with respect to $h_0$.

A key tool in this paper is an old technique sometimes
called the length-area method 
(see, for example, \cite{jenkins1958}). 
It is based on the observation that
the $n'$th power of the absolute value of the gradient of a function
(where $n$ is the dimension of the space)
times the volume element is a conformal invariant.
Using this observation and coarea formula we can obtain the following
lemma, which will be used throughout the paper.

Let $N_{r} ^0(U)$ denote the set $\{x \in M: d^0(x,U) < r \}$
and $A_{r}^0(U) = N_{r} ^0(U) \setminus U$.

\begin{lemma} \label{length-area}
Let $U$ and $V$ be open subsets of $M$ with
$U \subset V \subset M$. For any $r>0$ there exists an
open set $U'$ with $U \subset U' \subset V \cap N_{r}^0(U)$, such that
$l( \partial U' \cap V) \leq 
\frac{\sqrt{\mu_0(A_{r} ^0(U) \cap V)}}{r} \sqrt{\mu(A_{r} ^0(U) \cap V)} $.
\end{lemma}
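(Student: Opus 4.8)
The plan is to sweep out the hyperbolic $r$-neighbourhood of $U$ by the level sets of the distance function $\rho(x) := d^0(x,U)$, and to extract by an averaging argument a level set that is short in the metric $h$. Abbreviate $\Omega := A^0_r(U)\cap V$. Note first that $\rho$ is $1$-Lipschitz with respect to $h_0$, hence Lipschitz with respect to $h$ since $M$ is closed, and $|\nabla^0\rho|_{h_0}\le 1$ wherever the gradient exists. For $s\in(0,r)$ put $U'_s := N^0_s(U)\cap V = \{\rho<s\}\cap V$; this set is open, contains $U$ (because $\rho\equiv 0$ on $U$ and $U\subset V$), and is contained in $V\cap N^0_r(U)$ since $s<r$. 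Because $V$ is open, every point $x\in\partial U'_s\cap V$ has a neighbourhood contained in $V$ on which $U'_s$ coincides with $\{\rho<s\}$, so $\rho(x)=s$; thus $\partial U'_s\cap V\subseteq\{\rho=s\}\cap V$, and consequently $l(\partial U'_s\cap V)\le l(\{\rho=s\}\cap V)$, where $l$ denotes $1$-dimensional Hausdorff measure with respect to $h$.

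It therefore suffices to produce some $s\in(0,r)$ for which $l(\{\rho=s\}\cap V)$ is suitably small, and the tool for this is the coarea formula. Applying coarea to $\rho$ with respect to $h$ on the region $\{0<\rho<r\}\cap V\subseteq\Omega$ gives
\[
\int_0^r l(\{\rho=s\}\cap V)\,ds \;=\; \int_{\{0<\rho<r\}\cap V}|\nabla\rho|_h\,d\mu \;\le\; \int_{\Omega}|\nabla\rho|_h\,d\mu .
\]
By the Cauchy--Schwarz inequality together with the conformal invariance of $|\nabla f|^2$ times the area element in dimension two (the observation quoted before the lemma), which gives $|\nabla\rho|_h^2\,d\mu = |\nabla^0\rho|_{h_0}^2\,d\mu_0 \le d\mu_0$ as measures,
\[
\int_{\Omega}|\nabla\rho|_h\,d\mu \;\le\; \Big(\int_{\Omega}|\nabla\rho|_h^2\,d\mu\Big)^{1/2}\mu(\Omega)^{1/2} \;\le\; \sqrt{\mu_0(\Omega)}\,\sqrt{\mu(\Omega)} .
\]
Combining the two displays, the average over $s\in(0,r)$ of $l(\{\rho=s\}\cap V)$ is at most $\tfrac{1}{r}\sqrt{\mu_0(\Omega)}\sqrt{\mu(\Omega)}$, so the set of those $s\in(0,r)$ for which $l(\{\rho=s\}\cap V)\le\tfrac{1}{r}\sqrt{\mu_0(\Omega)}\sqrt{\mu(\Omega)}$ has positive measure. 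Choosing any such $s$ and setting $U':=U'_s$ then yields an open set with $U\subset U'\subset V\cap N^0_r(U)$ and $l(\partial U'\cap V)\le\tfrac{1}{r}\sqrt{\mu_0(A^0_r(U)\cap V)}\sqrt{\mu(A^0_r(U)\cap V)}$, as required.

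All the estimates above are elementary, so there is no serious obstacle; the only points that need a little care are regularity matters — the applicability of the coarea formula to the Lipschitz function $\rho$ (standard), and, should a later application require $\partial U'$ to consist of piecewise smooth curves rather than merely a $1$-rectifiable set, replacing $\rho$ by a smooth approximation of the distance function or invoking Sard's theorem to pick $s$ to be a regular value. The conformal identity, the application of Cauchy--Schwarz, and the averaging step I expect to be routine.
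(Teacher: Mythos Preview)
Your argument is correct and follows essentially the same route as the paper: sweep the annular region by level sets of the $h_0$-distance to $U$, apply coarea, Cauchy--Schwarz, and the conformal invariance of the Dirichlet integrand, then pick a good level. The only difference is cosmetic: the paper uses the intrinsic distance $d^0_V(x,U)$ induced by $h_0|_V$ (so that level sets automatically lie in $V$ and $|\nabla^0 f|=1$ a.e.), whereas you use the ambient distance $d^0(x,U)$ and intersect with $V$, obtaining $|\nabla^0\rho|\le 1$; both choices yield the same bound.
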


\begin{proof}
Let $d^0_V$ denote the distance function induced by the restriction of
Riemannian metric $h_0$ to the open set $V$. Observe that
for any two points $x$ and $y$ in $V$ we have $d^0(x,y) \leq d^0_V(x,y)$.
In particular, we have that 
$A_{r}^0(U,V) = \{x \in V: d^0_V(x,U) < r \} \setminus U 
\subset A_{r}^0(U) \cap V$.
Define a function $f: V \setminus U \rightarrow \mathbb{R}$
by setting $f(x)= d^0_V(x,U)$. 
By Rademacher's theorem $f$ is differentiable almost everywhere.
By coarea formula we have
$$ \int_{t=0} ^{r} l(f^{-1}(t)) dt =\int_{A_{r}^0(U,V)} |\nabla f| d \mu$$
By Cauchy-Schwartz inequality this quantity can be bounded above by 
$$(\int_{A_{r}^0(U,V)} |\nabla f|^{2} d\mu)^{1/2} \mu (A_{r}^0(U,V))^{1/2}$$
We observe that $|\nabla f|^{2} dV$ is a conformal invariant, so
$$\int_{A_{r}^0(U,V)} |\nabla f|^{2} d\mu
=\int_{A_{r}^0(U,V)} |\nabla^0 f|^{2} d\mu_0 = \mu_0 (A_{r}^0(U,V))$$
It follows that for some $l \in [0,r]$ the set $U'= f^{-1}([0,l]) \cup U$
 has boundary length at most 
 $\frac{\sqrt{\mu_0(A_{r} ^0(U,V) )}}{r} \sqrt{\mu(A_{r} ^0(U,V))}$.

\end{proof}

\section{Tessellations of hyperbolic surfaces} \label{tessellation}

We use the following tessellation of a Riemann surface
due to Buser.

\begin{proposition} \label{hyp covering}
(Buser)
Let $\Sigma$ be a closed hyperbolic surface.
There exists a tessellation of $\Sigma$
into polygons $\mathcal{T}=\mathcal{T}_1 \cup \mathcal{T}_2$
with the following properties:

1. $\mathcal{T}_1$ is a collection of triangles
with sidelengths
between $\log(2)$ and $2\log(2)$
and areas between $0.19$ and $0.55$.

2. $\mathcal{T}_2$ is a collection of quadrilaterals 
(see figure \ref{trirectangle} ) with 
three right angles and one angle $\phi> \pi/3$.
The sidelengths satisfy the following relations:
$a \leq \log(2)/2$, $\log(2)/2 \leq c \leq 0.45$
and $b \geq d \geq 0.57$. The area of each
quadrilateral is between $0.26$ and $0.34$.

3. For each polygon $T \in \mathcal{T}$ the $1/2$-neighbourhood
of $T$ is contained in at most $40$ polygons of $\mathcal{T}$.
\end{proposition}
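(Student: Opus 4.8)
The plan is to derive the tessellation from the classical thick--thin decomposition of $\Sigma$. Fix a threshold $\varepsilon_0$ of size comparable to $\log 2$ and apply the collar lemma: this exhibits $\Sigma$ as $\Sigma_{\mathrm{thin}}\cup\Sigma_{\mathrm{thick}}$, where $\Sigma_{\mathrm{thin}}$ is a disjoint union of finitely many embedded collars $\mathcal C(\gamma_j)$ about the simple closed geodesics $\gamma_j$ with $\ell_j=\ell(\gamma_j)<\varepsilon_0$, each isometric to the standard collar $\{(\rho,t):|\rho|\le w_j,\ t\in\R/\ell_j\Z\}$ with metric $d\rho^2+\cosh^2\rho\,dt^2$ and $\sinh w_j=1/\sinh(\ell_j/2)$, while on $\Sigma_{\mathrm{thick}}$ the injectivity radius is bounded below by a constant of size $\varepsilon_0$. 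The common boundary $\partial\Sigma_{\mathrm{thick}}=\partial\Sigma_{\mathrm{thin}}$ consists of the equidistant curves $\{\rho=\pm w_j\}$, and I would build the two tessellations so that these curves (or small geodesic perturbations of them) are edges of both.

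For $\Sigma_{\mathrm{thick}}$ I would take a point set $P$ that is $\log 2$-separated, $\log 2$-dense, and contains a $\log 2$-dense subset of each boundary curve, and pass to the associated constrained Delaunay triangulation, with the short boundary sub-arcs promoted to edges. Separation gives every edge length $\ge\log 2$; Delaunay emptiness together with $\log 2$-density forces every circumradius $\le\log 2$, hence every edge length $\le 2\log 2$ by the chord--angle relation in a hyperbolic circle. A computation with the hyperbolic law of cosines, optimised over triangles with all sides in $[\log 2,2\log 2]$ and circumradius $\le\log 2$, then yields $0.19\le\mathrm{Area}\le 0.55$ (after a local adjustment of $P$ near the boundary to absorb the few oversized triangles forced by the boundary constraint). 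Local finiteness follows from a packing estimate: a hyperbolic disc of radius $2\log 2$ has area $2\pi(\cosh(2\log 2)-1)$ and every triangle has area $\ge 0.19$, so only boundedly many triangles meet a fixed one.

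For each collar $\mathcal C(\gamma_j)$ I would cut along the geodesic arcs $\{t=\mathrm{const}\}$ perpendicular to the core, spaced at distance $\sim\log 2$ along $\gamma_j$ (one arc only if $\ell_j$ is already that small), then cut radially, replacing the equidistant boundary curves by geodesic chords joining consecutive perpendiculars. Each cell is a Saccheri quadrilateral (two right angles on the inner side, two equal acute angles on the outer side); bisecting it along its axis of symmetry gives two congruent Lambert quadrilaterals with three right angles. The number and depth of the radial cuts are chosen so that each quadrilateral has area in $[0.26,0.34]$ --- possible since a collar has total area $2\ell_j\sinh w_j$ bounded above by a universal constant --- and then the fourth angle equals $\pi/2-\mathrm{Area}\in[\pi/2-0.34,\pi/2-0.26]$, so $\phi>\pi/3$. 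The trirectangle identities translate these normalisations into the stated side bounds $a\le\log 2/2$, $\log 2/2\le c\le 0.45$, $b\ge d\ge 0.57$; the only thing to verify here is that $\varepsilon_0$ can be chosen small enough that every collar is deep enough to hold quadrilaterals with $d\ge 0.57$.

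Property 3 is then assembled from local finiteness on both parts together with the diameter bounds: on $\Sigma_{\mathrm{thick}}$ each triangle has diameter $\le 2\log 2$, so its $1/2$-neighbourhood meets $O(1)$ triangles by the packing estimate; inside a collar, consecutive perpendicular cuts are $\sim\log 2$ apart and consecutive radial cuts are comparable by the area normalisation, so a $1/2$-neighbourhood of a quadrilateral meets $O(1)$ quadrilaterals; and a $1/2$-neighbourhood of a polygon on $\partial\Sigma_{\mathrm{thick}}$ protrudes into the adjacent part by at most $1/2$, hence meets $O(1)$ polygons there. The constant $40$ is a deliberately wasteful bound absorbing all of these. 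I expect the main obstacle --- and the real content of Buser's construction --- to be not any single step but their compatibility: the bookkeeping at the interface $\partial\Sigma_{\mathrm{thick}}$, and the simultaneous choice of $\varepsilon_0$, the net scale $\log 2$, and the collar area target $\sim 0.3$ so that all the explicit constants ($0.19$--$0.55$, $0.26$--$0.34$, the side bounds, and ultimately $40$) hold at once.
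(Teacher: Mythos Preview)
Your overall strategy is the same as the paper's (and Buser's): thick--thin decomposition, triangulate the thick part using a maximal $\log 2$-separated net, tile the collars by Lambert quadrilaterals, and obtain property~3 by a packing/angle estimate. The paper's proof is itself just a sketch with a citation to Buser, and on the thick part your Delaunay description is a perfectly good way to implement the same idea.

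Where your sketch departs from Buser---and where there is a genuine gap---is in the collar. You propose adjusting ``the number and depth of the radial cuts'' so that each quadrilateral has area in $[0.26,0.34]$. But cutting at intermediate $\rho$-levels does not produce Lambert quadrilaterals: the piece between two geodesic chords at different heights meets the perpendicular sides in \emph{no} right angles, so you lose the three-right-angle property and the trirectangle identities you invoke for the side bounds no longer apply. Buser's actual construction avoids this: he makes exactly two perpendicular cuts per collar, yielding precisely eight congruent Lambert quadrilaterals regardless of how thin the core is, and the area then lands automatically in $[0.26,0.34]$ because the trigon has universally bounded total area. In particular the long sides $b,d$ run essentially the full depth of the half-collar and are \emph{unbounded} as $\ell(\beta_i)\to 0$; this is why the statement gives only lower bounds $b\ge d\ge 0.57$, and it is a feature the paper exploits later (the ``arbitrarily long but narrow'' quadrilaterals in the proof of the annulus-covering lemma). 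So your collar step should be replaced by Buser's fixed $8$-quadrilateral scheme rather than a variable-depth one.
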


\begin{figure} 
   \centering	
	\includegraphics[scale=0.3]{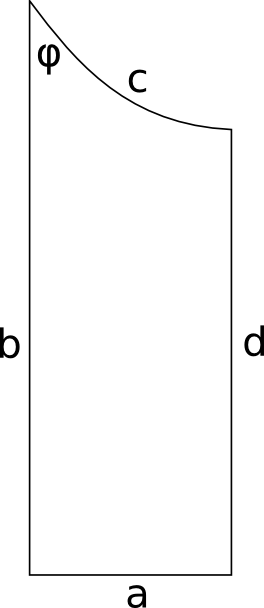}
	\caption{$\mathcal{T}_2$ consists of hyperbolic quadrilaterals with three right angles.}
	\label{trirectangle}
\end{figure}

\begin{proof}
The construction of Buser (\cite{buser}, p.116-121)
relies on the thin-thick decomposition of $\Sigma$.
Let $\beta_1, ..., \beta_k$ be the
set of all simple closed geodesics of length
$\leq  \log(2)$ and let 
$w_i=arcsinh(\frac{1}{sinh(\frac{1}{2}|\beta_i|)})>1 $.
Then the tubular neighbourhood of $\beta_i$
$C_i = \{p \in \Sigma| d(p, \beta_i) \leq w_i \}$
is isometric to the cylinder
$[-w_i,w_i] \times S^1$ with the Riemannian metric
$ds^2 = d \rho^2 + |\beta_i|^2 cosh^2(\rho) dt^2$.
Moreover, the cylinders $C_i$ are disjoint.

In each collar $C_i$
Buser defines two isometric annular regions, which
he calls trigons. One boundary component of the trigon
is the closed geodesic $\beta_i$ and the other boundary
component consists of two geodesic arcs of equal length.
The endpoints of these geodesic arcs lie 
at a distance $w_i - \log(2)/2$ from $\beta_i$.
Each trigon can be subdivided into four isometric
quadrilateral as on Figure $\ref{trirectangle}$.
These quadrilaterals have three right angle.
A computation then yields the desired bounds
on the sidelengths and the fourth angle.
We define $\mathcal{T}_2$ to be the collection of all
such quadrilaterals (eight in each collar).

In the remaining (thick) part of
$\Sigma$ the injectivity radius at a point $x$ is
bounded from below by $\min \{\log(2),d(x,V_2) \}$,
where $V$ denotes the set of vertices of quadrilaterals 
in $\mathcal{T}_2$. Buser considers a maximal set of points
at pairwise distances at least $\log(2)$. He then defines 
a geodesic triangulation of the thick part
with this set as the set of vertices.

To prove the last assertion we observe that the worst case is when 
$T$ is a triangle that is not adjacent to any of
the quadrilaterals. As computed by Buser,
all angles of the triangle are bounded
below by $22.6^{\circ}$. It follows that $1/2$-neighbourhood
of $T$ can be covered by less than $40$ triangles.
\end{proof}


\section{Sweepouts of open subsets of $\R^2$} \label{sec: Guth}

Our proof of Proposition \ref{small covering sweepout}
relies on its Euclidean analogue.
Namely, we need to know that for any open subset $U$ of Euclidian
plane there exists a sweepout of $U$ by relative
cycles of small length.
This result was proved by Guth in \cite{Guth2007}
along with its high dimensional generalizations.

\begin{theorem}(Guth) \label{Guth}
Let $U \subset \R^2$ be a bounded open subset 
with piecewise smooth boundary. 
There exists a monotone sweepout of $U$
by cycles of length $\leq 3  \sqrt{Area(U)}$.
\end{theorem}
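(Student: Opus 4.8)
The plan is to build the sweepout by sweeping $U$ with horizontal-type slices organized dyadically, the key point being to control the total extra boundary length created when we pass from one slice to the next. First I would reduce to the case where $U$ lies in a unit-area square $Q$ by rescaling: if $\mathrm{Area}(U) = a$, apply the similarity that shrinks the ambient plane by $\sqrt{a}$, prove the bound with constant $3$ for area-$1$ domains, and scale back (lengths scale like $\sqrt{a}$ and the constant is preserved). So assume from now on $U \subset Q = [0,1]^2$ with $\mathrm{Area}(U) \le 1$.

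Next I would set up the dyadic bisection. Subdivide $Q$ into $2^n$ congruent vertical strips of width $2^{-n}$ (for a parameter $n$ to be sent to infinity); more efficiently, at stage $j$ use $2^j$ strips, alternating the direction of subdivision (vertical, then horizontal, then vertical, \dots) so that after $n$ stages we have $2^n$ squares of side $2^{-n/2}$ when $n$ is even. Order these $2^n$ cells $Q_1, \dots, Q_{2^n}$ along a path (e.g.\ boustrophedon) so that consecutive cells are adjacent. Define the nested family $A_s$ for $s \in [0,1]$: for $s = i/2^n$ set $A_s = U \cap (Q_1 \cup \dots \cup Q_i)$, and interpolate for intermediate $s$ by sweeping across the single cell $Q_{i+1}$ with straight slices parallel to one side. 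Set $z_s = \overline{\partial A_s \cap \mathrm{int}(U)}$. This is manifestly a monotone sweepout in the sense of Definition \ref{monotone} once we check it is non-contractible, which follows because $A_0$ is empty, $A_1 = U$, so the family fills the fundamental class of $(U,\partial U)$.

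The heart of the argument is the length bound, and this is the step I expect to be the main obstacle. The cycle $z_s$ decomposes into two parts: (i) the \emph{static part}, consisting of portions of $U$'s slices along the already-traversed cell walls $\partial Q_1, \dots$ that still bound $A_s$ from inside $U$; and (ii) the \emph{moving part}, the slice sweeping across the current cell $Q_{i+1}$. Part (ii) has length at most the side length of a cell, namely $2^{-n/2}$, which $\to 0$. For part (i), the subtle point is that we do \emph{not} pay for every cell wall we have crossed: a wall segment contributes to $z_s$ only where it separates $A_s$ from $U \setminus A_s$ \emph{within} $U$, and by choosing, at each dyadic stage, the subdivision line (among a family of parallel candidate lines in a band of definite width) that minimizes $\mathrm{length}(\partial U \cap \text{slab-wall})$ — a pigeonhole / averaging argument exactly as in the coarea estimate of Lemma \ref{length-area} — the total length of all stage-$j$ walls meeting $U$ is $O(2^{j/2})$, and summing the geometric-type contributions over $j = 0, \dots, n$ and optimizing the constants gives a total bounded by $3\sqrt{\mathrm{Area}(U)}$ (here one uses $\mathrm{Area}(U) \le 1$ and the fact that $U$ need only be bisected, not cut into singletons, to terminate the recursion once a piece has tiny area — its contribution to the moving part is then negligible). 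A small perturbation makes the curves piecewise smooth, simple, and pairwise disjoint, at the cost of $\epsilon$ in length. I would present the $O(2^{j/2})$ wall estimate carefully and refer to \cite{Guth2007} for the optimization of the numerical constant, since squeezing $3$ out of the bookkeeping is the one genuinely delicate computation.
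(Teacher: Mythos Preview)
Your length estimate has a genuine gap. You assert that the stage-$j$ walls meet $U$ in total length $O(2^{j/2})$ and then claim that ``summing the geometric-type contributions over $j=0,\dots,n$'' yields a bound of $3\sqrt{\mathrm{Area}(U)}$. But $\sum_{j=0}^{n} 2^{j/2}$ diverges as $n\to\infty$; there is nothing geometric about this sum in the direction you need. You appear to be conflating two mechanisms: a fixed-grid averaging argument and an adaptive Balacheff--Sabourau style bisection. For the latter to produce a \emph{convergent} series you would need the single stage-$j$ wall appearing in $\partial A_s$ at a given time (not the total over all stage-$j$ cells) to have length $O(2^{-j/2})$ inside $U$, and that requires the area of $U$ in the relevant stage-$(j-1)$ cell to be $O(2^{-(j-1)})$, which is not automatic from your construction. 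Your remark about ``terminating the recursion once a piece has tiny area'' addresses the moving part, whereas the difficulty lies entirely in the static part.

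The paper avoids any dyadic refinement. After scaling $U$ to area $1$, one takes the \emph{unit} integer grid and, by averaging over translates, finds a position of that grid whose entire $1$-skeleton meets $U$ in total length at most $2$. One then encloses $U$ in a large square tiled by the grid and removes the unit squares one at a time along an adjacency path, interpolating across the current square by a single moving segment. Each cycle $z_t=\overline{\partial A_t\cap \mathrm{int}(U)}$ decomposes into a static part lying in the $1$-skeleton (hence of length $\le 2$, since the whole $1$-skeleton in $U$ has length $\le 2$) and a moving segment of length $\le 1$, giving the constant $3$ directly. The key point you are missing is that a \emph{coarse} grid already suffices: because the static part is bounded by the total $1$-skeleton length inside $U$ rather than by any recursive sum, there is no need to refine the cells at all.
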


\begin{proof}
We give an outline of the argument in \cite{Guth2007}. 
The 2-dimensional case is significantly easier
than the general inequality obtained by Guth
for $k$-dimensional cycles sweeping out an open subset
in $\R^n$.

At first one may hope that for some line $l \in \R^2$
the projection of $U$ on $l$ will have short fibers.
However, there exist sets in $\mathbb{R}^2$
(known as Besicovitch sets) of arbitrarily small area
such that any such projection will contain a fiber 
 of length larger than $1$.

Instead of sweeping out $U$ by parallel lines
we will use cycles that are mostly contained in 
the 1-skeleton of a square grid.
Scale $U$ to have area $1$. 
If we consider translates of the unit grid
the total length of the intersection 
of the 1-skeleton (i.e. 
the union of the edges) with set $U$ will have,
on average, length equal to $2$.
(This can be seen as follows. 
First we translate the unit grid horizontally
until the intersection of $U$ with vertical lines of the grid
has length $1$; then we translate the grid vertically
until the intersection of $U$ with horizontal
lines of the grid has length $1$ giving us total length $2$).
Consider a large square $C_0=[-N,N]^2$ that contains $U$
and let $l_0 = \partial C_0$.
Let $C_1 = C_0 \setminus [-N,-N+1] \times [N-1,N]$.
Continue removing unit squares one by one (see Figure \ref{euclidean}).
This way we obtain $N^2$ connected unions
of unit squares $C_i$ with boundary in the 1-skeleton of the
unit grid.
Observe that one can homotop $\partial C_i$ to
$\partial C_{i+1}$ via cycles that are contained in 1-skeleton except for a piece of length $1$.

\begin{figure} 
   \centering	
	\includegraphics[scale=0.6]{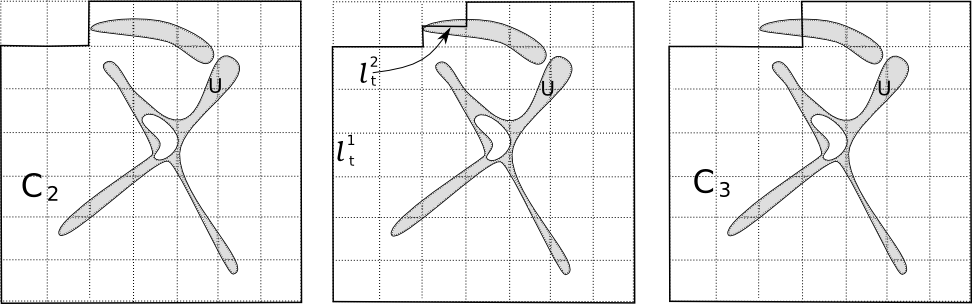}
	\caption{Monotone sweepout of a subset of $\R^2$.} \label{euclidean}
\end{figure}

This gives rise to a family of nested open sets $A_t$,
$A_{\frac{k}{4N^2}}=C_k$, and a homotopy $l_t = \partial A_t = l_t^1 + l_t ^2$,
where $l_t^1$ is contained in the unit grid and 
$l_t ^2$ is either empty or an interval of length $1$.
Defining $z_t= \overline{\partial A_t \cap int(U)}$ we obtain
a monotone sweepout with the desired length bound.


\end{proof}

\section{Sweepouts of subsets covered by a small number of polygons} \label{sec: good regions} 

When the genus $g$ of $M$  is greater than or equal to $2$ we scale
$(M,h_0)$ to have constant curvature $-1$.
By Gauss-Bonnet its volume satisfies $\mu_0(M) = 4 \pi(g-1)$.
By Proposition \ref{hyp covering} there exists a tessellation $\mathcal{T}$
of $M$ into polygons.

When $g$ is equal to $0$ or $1$ we scale the constant curvature space
(sphere, projective plane, torus or a Klein bottle) so that
it has volume $1$. In this case we set $\mathcal{T}$ to consist
of only one element, the whole space $M$.

\begin{lemma} \label{T}
$\mathcal{T}$ satisfies the following properties:

\begin{enumerate}
\item $\#\mathcal{T} \leq \max\{67(g-1),1\}$

\item 
Suppose $\{T_i \}_{i=1}^k \subset \mathcal{T}$, $k\leq 40$,
and let $B^0(x,r)$ be any ball and let $A$ denote the
annulus $B^0(x,\frac{3r}{2}) \setminus B^0(x,r)$.
There exists $42$ balls
$\{B^0(x_j,r)\}$, such that
$A \cap \bigcup T_i \subset \bigcup B^0(x_j,r)$.
\end{enumerate}
\end{lemma}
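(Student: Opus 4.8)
The plan is to prove the two assertions separately, the first being a counting argument and the second a covering argument in hyperbolic geometry. For assertion (1), I would use Gauss–Bonnet: the tessellation $\mathcal{T} = \mathcal{T}_1 \cup \mathcal{T}_2$ covers $M$, which has hyperbolic area $\mu_0(M) = 4\pi(g-1)$. By Proposition \ref{hyp covering}, each triangle in $\mathcal{T}_1$ has area at least $0.19$ and each quadrilateral in $\mathcal{T}_2$ has area at least $0.26$, so the smallest possible area of a tile is $0.19$. Hence $\#\mathcal{T} \leq \mu_0(M)/0.19 = 4\pi(g-1)/0.19$. Since $4\pi/0.19 \approx 66.1 < 67$, this gives $\#\mathcal{T} \leq 67(g-1)$ when $g \geq 2$, and when $g \in \{0,1\}$ we have $\#\mathcal{T} = 1$ by construction, so in all cases $\#\mathcal{T} \leq \max\{67(g-1), 1\}$.

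For assertion (2), the goal is to cover the portion $A \cap \bigcup_{i=1}^k T_i$ of a hyperbolic annulus $A = B^0(x, \tfrac{3r}{2}) \setminus B^0(x, r)$ by $42$ balls of radius $r$. The key point is that each tile $T_i$ has bounded diameter: triangles have sidelengths at most $2\log 2$, and the quadrilaterals have all sides bounded by fixed constants (the largest being $b$, which is bounded since the quadrilaterals have three right angles and bounded area), so there is a universal constant $D$ with $\mathrm{diam}(T_i) \leq D$. I would split into two cases according to the size of $r$. If $r \geq D$ (say), then each $T_i$ that meets $A$ is contained in a single ball $B^0(y_i, r)$ centred at a point of $T_i \cap A$, and since $k \leq 40$ these $40$ balls suffice, well within the budget of $42$. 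If $r < D$, then $A$ is a thin annulus: I would cover $A \cap \bigcup T_i$ by invoking a volume/packing bound. Specifically, $A$ is contained in the ball $B^0(x, \tfrac{3r}{2})$, whose hyperbolic area is at most that of a Euclidean disc of comparable radius for small $r$ — more carefully, a ball of radius $\tfrac{3r}{2}$ in a surface of curvature $-1$ has area $2\pi(\cosh(\tfrac{3r}{2}) - 1)$, which for $r$ bounded is $O(r^2)$. A maximal $r/2$-separated set in $A \cap \bigcup T_i$ then has at most $N$ points, where $N$ is bounded by an area ratio that is uniformly bounded for $r < D$; concentrically enlarging these balls to radius $r$ covers the set. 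I would then check that the constant $N$ can be arranged to be at most $42$, possibly by optimizing the threshold $D$ used to separate the two cases, or by a more careful direct estimate exploiting that $A$ is an annulus (not a full ball) so one really only needs to cover a thin collar.

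The main obstacle is the bookkeeping in the small-$r$ case: one must extract the explicit constant $42$, which requires a sharp enough packing estimate for thin hyperbolic annuli rather than the crude bound one gets from the area of the enclosing ball. I expect the cleanest route is to note that $A \cap \bigcup T_i$ has hyperbolic area at most $40 \cdot 0.55$ (the total area of at most $40$ tiles, using the maximal tile area $0.55$), and independently at most the area of the annulus $A$; combining these with a lower bound $c r^2$ on the area of a ball of radius $r/2$ contained in a maximal separated net gives a bound on the net's cardinality that is uniform in $r$ and, after checking the constants, at most $42$. Balancing the two cases at the right threshold for $r$ is where the precise value $42$ is pinned down, and this is the computation I would carry out carefully in the full proof.
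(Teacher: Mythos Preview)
Your argument for (1) is correct and matches the paper's proof.

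For (2), there is a genuine gap. Your large-$r$ case rests on the claim that every tile has diameter bounded by a universal constant $D$, and in particular that the side $b$ of a quadrilateral is bounded because the quadrilateral has three right angles and bounded area. This is false in hyperbolic geometry: a Lambert quadrilateral can be arbitrarily long and thin while keeping its area bounded. Concretely, in Buser's construction the quadrilaterals sit in collars around short geodesics $\beta_i$; one pair of opposite sides has length comparable to the collar width $w_i = \operatorname{arcsinh}\bigl(1/\sinh(\tfrac12|\beta_i|)\bigr)$, which tends to infinity as $|\beta_i|\to 0$, while the area stays between $0.26$ and $0.34$. Proposition~\ref{hyp covering} only gives $b\geq d\geq 0.57$, a lower bound. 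So for large $r$ you cannot simply enclose each $T_i$ in a single ball of radius $r$.

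The paper handles this by exploiting the collar structure directly: for $r\geq 2$, each triangle fits in a ball of radius $\log 2 < r$, while a long quadrilateral $T$ is narrow (width at most $0.45$), so $A\cap T$ consists of at most two short pieces, each fitting in a ball of radius $r$; a case analysis on whether $x$ lies in the same half-collar as $T$ yields the bound $42$. For $r\leq 2$ the paper abandons the tiles altogether and shows by a direct angle computation in the hyperbolic plane that the entire annulus $B^0(x,\tfrac{3r}{2})\setminus B^0(x,r)$ is covered by $21$ balls of radius $r$. Your packing-by-area idea for small $r$ is plausible in spirit, but the paper's direct covering is what actually pins down the constant; in any case the large-$r$ argument needs to be replaced.
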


\begin{proof}
When genus $g \leq 1$ we have $\# \mathcal{T}=1$. 
It is easy to show that an annulus in the plane
 $B(3/2) \setminus B(1) \subset \R^2$
can be covered by $5$ discs of radius $1$. 
A similar covering also works on the round sphere $S^2$.
We conclude that both properties hold when $g \leq 1$.

Suppose $g\geq 1$.
The first property follows since areas of
polygons in $\mathcal{T}$ are bounded from below by 0.19.

To prove the second property we 
consider two cases.
Suppose $B(x,r)$ is a ball with $r \geq 2$.
We can cover every triangle in $\mathcal{T}$ by a ball
of radius $\log(2)<r$. The remaining points of
$A \cap \bigcup T_i$ lie in quadrilaterals.
A quadrilateral $T \in \mathcal{T}$ can be arbitrarily long,
but it has to be narrow: by construction the distance from a point $x$ on one of its long sides to the other long side is at most $0.45$. We can assume that the length of the side $d$ of $T$
(see Fig. \ref{trirectangle})
is greater than $3$ for otherwise we would have that
$T$ is contained in some ball of radius $r$.

Recall from Buser's construction of quadrilaterals that we described 
in the proof of Proposition \ref{hyp covering}
that $T$ is contained in a hyperbolic collar
along with other $7$ isometric quadrilaterals. 
Four of them lie to one side of a closed geodesic
$\beta$ that cuts the collar in the middle and
four of them lie to the other side of $\beta$. Let 
$C_T$ denote the union of the $4$ quadrilaterals
that lie on the same side of $\beta$ as $T$.
We consider two possibilities. Suppose first that
the center of the ball $x$ does not lie in $C_T$.
We observe that in this case $A \cap T$
is contained in a quadrilateral inside $T$
that can be covered by one ball of radius $r$.
Suppose $x \in C_T$. Then $A \cap T$ is contained
in two subsets of $T$ each of which can be covered
by a ball of radius $r$. If other $3$ quadrilaterals
in $C_T$ are not elements of $\mathcal{T}$ it follows that
we need at most $41$ ball to cover $A \cap \bigcup T_i$.
Notice also that all of $A \cap C_T$ can be covered by 
at most $4$ balls of radius $r$.
It follows then that the worst case is when
exactly two quadrilaterals in $C_T$
are elements of $\mathcal{T}$. Then we will need
at most $42$ balls.


\begin{figure}
   \centering	
	\includegraphics[scale=0.8]{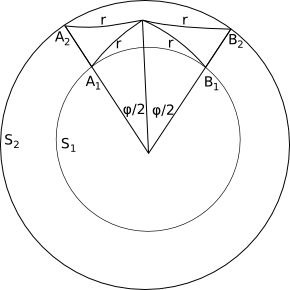}
	\caption{Covering annulus in hyperbolic plane}  \label{covering}
\end{figure}

Suppose $r \leq 2$. In this case we need only $21$
balls $B^0(x_j,r)$ to cover $A$.
This is illustrated on Figure \ref{covering}.
Consider two concentric circles $S_1$ and $S_2$ in the hyperbolic plane
of radii
$r$ and $\frac{3}{2}r$ respectively.
Suppose two geodesic rays emanating from 
$x$ intersect circles $S_1$
and $S_2$ at $A_1$, $B_1$ and $A_2$, $B_2$.
For a correct value of the angle $\phi(r)$
between two geodesic rays we will have
all four points lying on a circle of radius $r$.
For $r \in [2, \infty)$ angle $\phi(r)$ is minimized when $r=2$.
We compute $\phi \geq 17.8^{\circ}$, so $21$
discs will cover the annulus.


\end{proof}

\begin{proposition} \label{small covering sweepout}
Let $U \subset M$ be an open subset with 
boundary and suppose there exists $k$ sets $T_i \in \mathcal{T}$,
$k \leq 40$, such that $U \subset \bigcup T_i$.
Then there exists a monotone sweepout $z_t$ of $U$,
such that $l(z_t) \leq 489 \sqrt{\mu(U)}$.
\end{proposition}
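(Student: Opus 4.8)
The plan is to reduce the statement, by the iterated subdivision scheme sketched in Section \ref{outline}, to the case of a subset of $M$ of arbitrarily small area, and then to handle the small-area case by cutting into pieces that are almost conformal, hence almost isometric after a small rescaling, to planar domains and invoking Guth's Theorem \ref{Guth}. Concretely, let $W(U)$ denote the infimum over monotone sweepouts of $U$ of the maximal length of a relative cycle. I want to prove a recursive inequality of the form $W(U) \le \max\{W(U_1),W(U_2)\} + c_0 \sqrt{\mu(U)}$, where $U_1,U_2$ partition $U$, each has area at most $r\,\mu(U)$ for a fixed $r\in(0,1/2)$, and the separating relative cycle has length $\le c_0\sqrt{\mu(U)}$; the gluing is exactly Lemma \ref{concat}. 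The separating cycle is produced by Lemma \ref{length-area}: given $U$ covered by $\le 40$ polygons of $\mathcal{T}$, one chooses a ball $B^0(x,r_0)$ whose $h_0$-area is a definite fraction of $\mu_0(U\cap\bigcup T_i)$, looks at the annulus $A=B^0(x,\tfrac{3r_0}{2})\setminus B^0(x,r_0)$, applies Lemma \ref{length-area} inside $U$ to the distance function from $B^0(x,r_0)$, and uses property (2) of Lemma \ref{T} (covering $A\cap\bigcup T_i$ by $42$ balls of radius $r_0$) to bound $\mu_0$ of the relevant region and hence the length of the level set by $c_0\sqrt{\mu(U)}$ — here the factor $42$ and the constants in Proposition \ref{hyp covering} are what feed into the final numerical constant $489$. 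One also needs to arrange that this level set actually splits off a region of area between, say, $r\mu(U)$ and $(1-r)\mu(U)$, which one gets by choosing the center $x$ so that $B^0(x,r_0)$ already contains a fixed fraction of $\mu(U)$, or by a continuity/intermediate-value argument sliding $x$.

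Iterating this inequality $n$ times and summing the geometric series $\sum_{i\ge0} (\sqrt r)^i \sqrt{\mu(U)} = \frac{1}{1-\sqrt r}\sqrt{\mu(U)}$ gives $W(U) \le \frac{c_0}{1-\sqrt r}\sqrt{\mu(U)} + \max_{\vec j} W(U^{\vec j})$, where each $U^{\vec j}$ is one of the $2^n$ pieces at stage $n$ and has $\mu(U^{\vec j}) \le r^n \mu(U) \to 0$. So it suffices to show: for every $\delta>0$ and every open $V\subset U$ with $\mu(V) < \delta$ and $V$ still covered by our $\le 40$ polygons, $W(V)$ is small (say $\le \epsilon$), or at least $W(V) \le C\sqrt{\mu(V)}$ with a controlled $C$. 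For this I would cover $V$ by finitely many small balls on which the conformal factor relating $h$ to $h_0$ is nearly constant; on each such ball $h$ is $(1+\epsilon)$-bilipschitz to a Euclidean disk, so the ambient surface metric is locally $(1+\epsilon)$-bilipschitz to a planar domain, and Guth's Theorem \ref{Guth} provides a monotone sweepout of each piece by cycles of length $\le 3(1+\epsilon)\sqrt{\mathrm{Area}}$; concatenating these with Lemma \ref{concat} along the (short, because we may take the pieces to have small total boundary relative to $\sqrt{\mu(V)}$) cut locus yields $W(V) \le C\sqrt{\mu(V)}$ with $C$ independent of the small piece. Combining, $W(U) \le \left(\frac{c_0}{1-\sqrt r} + C r^{n/2}\right)\sqrt{\mu(U)}$, and letting $n\to\infty$ gives the bound $W(U)\le 489\sqrt{\mu(U)}$ after optimizing the constants $r$, $c_0$, $C$.

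Two technical points deserve care. First, the recursion is not literally finite: one applies it $n$ times with $n$ large but fixed, and only needs the \emph{base} case $W(U^{\vec j})$ to be finite and bounded by $C\sqrt{\mu(U^{\vec j})}$ — the monotone-sweepout gluing of Lemma \ref{concat} has an $\epsilon$ loss at each stage, so one must make the $\epsilon$'s summable (e.g. $\epsilon/2^i$ at stage $i$), which is harmless. Second, and this is the main obstacle, one must check at each subdivision step that the smaller pieces $U^{\vec j}$ are still contained in at most $40$ polygons of $\mathcal{T}$ — this is automatic, since $U^{\vec j}\subset U \subset \bigcup_{i=1}^k T_i$ with $k\le 40$, so property (2) of Lemma \ref{T} remains applicable throughout; the genuinely delicate estimate is controlling, via the length-area Lemma \ref{length-area} together with the $42$-ball covering of Lemma \ref{T}(2) and the explicit geometry of Buser's tessellation, that the separating cycle at each stage has length at most $c_0\sqrt{\mu(U^{\vec j})}$ with $c_0$ small enough that $\frac{c_0}{1-\sqrt r}$ together with the base-case contribution comes in under $489$. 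Getting an \emph{area-balanced} short separating cycle — rather than merely a short cycle — is the point where one must combine the length-area inequality with a choice of the ball's center $x$ by a coarea/intermediate-value argument, and this is where I expect the bookkeeping to be heaviest.
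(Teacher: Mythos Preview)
Your approach is essentially the paper's: iterated subdivision via the length--area Lemma \ref{length-area} combined with the $42$-ball annulus covering of Lemma \ref{T}(2), glued by Lemma \ref{concat}, with the base case handled by reducing to Guth's Theorem \ref{Guth} on small bilipschitz-Euclidean balls.

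Two sharpenings align your sketch with the paper's execution. First, the ball $B^0(x,r)$ should be chosen by $\mu$-area, not $\mu_0$-area: take $r$ to be the \emph{smallest} radius such that some $h_0$-ball of radius $r$ captures $\mu(U')/44$ of the $\mu$-area; then every $h_0$-ball of radius $r$ has $\mu$-mass at most $\mu(U')/44$, so the $42$-ball covering of the annulus forces both pieces to have $\mu$-area $\le \tfrac{43}{44}\mu(U')$ automatically, and no intermediate-value argument is needed. Second, the base case (Proposition \ref{small volume sweepout}) does not yield $W(V)\le C\sqrt{\mu(V)}$ with a uniform $C$ --- the paper only proves $W(V)\le\epsilon$ for $\mu(V)<\delta(\epsilon)^2$, with $\delta$ depending on $\epsilon$ and $M$ --- but this suffices: one fixes $\epsilon<0.001\sqrt{\mu(U)}$ at the outset, runs the induction on $n=\log_{44/43}(\mu(U')/\delta^2)$, and absorbs the $\epsilon$ into the numerical constant via $489\sqrt{43/44}+5.58+\epsilon\le 489$.
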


We will inductively cut $U$ into smaller pieces
until the volume of each piece becomes so small that we can 
apply Proposition \ref{small volume sweepout}.
We will then use Proposition \ref{concat} to concatenate these sweepouts 
into one sweepout.

\begin{proposition} \label{small volume sweepout}
For every $\epsilon>0$ there exists a $\delta>0$, such that for every 
open set $U \subset M$ with $\mu(U) < \delta^2$ there exists a monotone sweepout 
$z_t$ of $U$ of length
$l(z_t)\leq  \epsilon$.
\end{proposition}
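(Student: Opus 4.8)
The plan is to reduce $U$ to a finite union of pieces, each of which is $(1+\epsilon')$-bilipschitz to an open subset of the Euclidean plane, and then apply Guth's Euclidean sweepout (Theorem~\ref{Guth}) together with the concatenation lemma (Lemma~\ref{concat}). First I would fix $\epsilon>0$ and choose $\delta$ small; the idea is that on a compact Riemannian surface every point has a neighbourhood in which the metric is as close as we like to the Euclidean (or, in the presence of the conformal factor, a flat) metric. More precisely, using the uniformization/conformal structure, around each point $p\in M$ one can find a radius $\rho(p)>0$ and a coordinate chart in which the metric $h$ is $(1+\epsilon')$-bilipschitz to the flat metric, where $\epsilon'$ depends on $\epsilon$ and will be pinned down at the end. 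By compactness of $M$ there is a uniform lower bound $\rho_0>0$ on these radii.

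Next I would cut $U$ into finitely many pieces each contained in one of these good charts. Take a finite cover of $M$ by charts of the above type; let $\delta$ be small enough (in terms of $\rho_0$ and the number of charts) that if $\mu(U)<\delta^2$ then $U$ meets only charts in which it has very small Euclidean area, and in fact one can subdivide $U$ along a finite collection of smooth curves into pieces $U_1,\dots,U_N$ with each $\overline{U_j}$ lying inside a single good chart. (Concretely: take a smooth triangulation or grid subordinate to the cover; since $U$ has tiny area we may also arrange, by a length--area / pigeonhole argument as in Lemma~\ref{length-area} or by Sard, that the cutting curves have total length $\leq\epsilon/2$, or simply absorb that length into the bound by choosing the grid fine but fixed and $\delta$ small relative to it.) Inside each good chart, $U_j$ is $(1+\epsilon')$-bilipschitz to a bounded open subset of $\R^2$ with piecewise smooth boundary, so Theorem~\ref{Guth} gives a monotone sweepout of the Euclidean model by cycles of length $\leq 3\sqrt{\mathrm{Area}}$; pulling back via the bilipschitz map yields a monotone sweepout of $U_j$ by cycles of length $\leq 3(1+\epsilon')\sqrt{\mu(U_j)}\leq 3(1+\epsilon')\sqrt{\mu(U)}<3(1+\epsilon')\delta$.

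Finally I would assemble these using Lemma~\ref{concat}. The cutting curves separate $U$ into the $U_j$; applying Lemma~\ref{concat} repeatedly (at each stage gluing one more piece along the part of its boundary interior to $U$) produces a monotone sweepout $z_t$ of $U$ whose cycles decompose as $z_t^1+z_t^2$ with $l(z_t^1)\leq 3(1+\epsilon')\delta + (\text{small error accumulated over } N \text{ gluings})$ and $z_t^2$ supported in the union of cutting curves, which has length $\leq\epsilon/2$. Choosing $\delta$ small enough that $3(1+\epsilon')N\delta<\epsilon/4$, say, and the perturbation errors in Lemma~\ref{concat} to sum to $<\epsilon/4$, we get $l(z_t)\leq\epsilon$.

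The main obstacle is organizing the cutting so that the number $N$ of pieces and the total length of cutting curves are controlled \emph{before} $\delta$ is chosen: the cover of $M$ by good charts (hence the ambient grid) is fixed, but as $\delta\to 0$ the set $U$ could in principle be spread thinly across many cells of the grid, so $N$ is bounded only by the fixed number of grid cells meeting $M$, which is fine — it is a constant depending on $M$ and $\epsilon'$ only — and then $\delta$ is chosen last, depending on that constant. One must also be slightly careful that the bilipschitz charts can be taken with the conformal factor built in (so that "Euclidean" really means the flat metric $h_0$ in isothermal coordinates, rescaled by a nearly-constant factor over a small chart), which is exactly where smoothness and compactness of $M$ are used; this is routine but should be stated. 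The perturbations required by Lemma~\ref{concat} to make boundaries meet the cutting curves transversally in finitely many arcs are harmless and can be made arbitrarily small, contributing the summable error above.
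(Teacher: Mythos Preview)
Your approach is correct and genuinely different from the paper's. The paper never fixes a global cover of $M$; instead it covers $U$ by a maximal packing of $\delta/6$-balls, lets $k$ be the number of balls, and proves by induction on $k$ that $U$ admits a monotone sweepout of length at most $500\log(k+1)\delta$. The inductive step peels off roughly $k/100$ balls at a time, using coarea to find a short separating curve; the base case ($k\le 100$) is handled exactly as you do, by Guth's theorem in each ball plus Lemma~\ref{concat}. Since $k\le C\mu(M)/\delta^2$, the length is $O(\delta\log(1/\delta))\to 0$. Your argument is more elementary: a fixed finite atlas of nearly-Euclidean charts, cut $U$ along (wiggled) chart boundaries, apply Guth in each piece, and concatenate. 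It trades the recursive bookkeeping for one global choice made in advance; the price is that the constants depend on the atlas (hence on $M$), which is harmless here since $M$ is fixed.

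Two remarks on the details. First, of your two options for controlling the cutting curves, only the length--area one actually works: with a \emph{fixed} grid (no wiggling) the edges of the grid inside $U$ can have length comparable to the total grid length, which is a constant independent of $\delta$ and cannot be made $\le\epsilon/2$. The correct version is the one you sketch via Lemma~\ref{length-area} or coarea: give the charts overlaps of some fixed width $w$, and for each chart $C_\alpha$ choose a level set $\partial C_\alpha'$ in that collar with $l(\partial C_\alpha'\cap U)\le \mu(U)/w<\delta^2/w$; then the $C_\alpha'$ still cover $M$, and the total length of all cutting curves is at most $N\delta^2/w$, which is $\le\epsilon/2$ once $\delta$ is small. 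Second, when you iterate Lemma~\ref{concat} over $N$ pieces you should note (as the paper does in Section~\ref{sec: covering}) that the ``free'' part $z_t^1$ really stays bounded by a \emph{single} piece's sweepout length plus the accumulated perturbation errors, while $z_t^2$ lands in the union of \emph{all} previously used separating curves; your final estimate $3(1+\epsilon')N\delta$ for $l(z_t^1)$ is therefore wasteful, though of course still sufficient.
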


\begin{proof}
Choose $\delta >0$ be smaller then the injectivity radius and
suppose that it is small enough so that for every $x \in M$
and every $r\leq \delta$
the ball $B(x,r)$ with metric $g$ restricted to it
is $1.01$-bilipschitz diffeomorphic to a disc of radius $r$
in $\mathbb{R}^2$.

We will show that there exists a monotone sweepout 
of $U$ by cycles of length 
$\leq C \log(1 / \delta^2 ) \delta$,
where $C$ is a constant that does not depend on $\delta$
(but depends on the volume of $M$).
Note that we can make this quantity arbitrarily small 
by choosing sufficiently small $\delta$.

Choose a maximal collection of disjoint balls in $U$ of
radius $\delta/6$. Let $\mathcal{B}$ denote the collection of balls
with the same centers and radius $\delta/2$.
Observe that balls in $\mathcal{B}$ cover $U$.
Let $k$ denote the number of balls in $\mathcal{B}$.

We claim that there exists a monotone sweepout $z_t$
satisfying

\begin{equation} \label{log(k)}
l(z_t) \leq 500 log(k+1) \delta
\end{equation}

We prove equation (\ref{log(k)}) by induction on $k$.
Suppose $k \leq 100$. 
By coarea inequality for each $B_i \in \mathcal{B}$
there exists a concentric ball $B_i' \supset B_i$ of radius $r$,
$\delta/2 \leq r \leq \delta$, such that
$l(\partial B_i' \cap U) \leq 2 \delta$.
By Theorem \ref{Guth}
there exists a monotone sweepout of 
$U \cap B_i'$ by cycles of length at most $4 \delta$.
Let $B_j$ be a different ball in $\mathcal{B}$.
As for $B_i$ we can find a sweepout of $B_j' \cap (U \setminus B_i')$
for some $B_j' \supset B_j$, such that
$B_j'$ has radius $\leq \delta$ and 
$l(\partial B_j' \cap (U \setminus B_i'))\leq 2 \delta$.
By Lemma \ref{concat} there exists a monotone
sweepout of $(B_i' \cup B_j) \cap U$ by cycles of length
$\leq 6 \delta$. By repeating this step at most $100$
times we obtain a monotone sweepout of $U$ by cycles of length
at most $204 \delta$.

Assume the assertion holds for all $U$ that can be covered by 
$< k$ balls of radius $\frac{1}{2} \delta$.
Let $k'$ be the smallest integer greater or equal to
$k/100$ and 
let $B$ denote the union of $k'$ balls in $\mathcal{B}$.
By coarea inequality there exists $r \leq \delta/2$, s.t.
the boundary of the tubular neighbourhood
$\partial (N_{r}(B) \cap U))$ has length at most $2 \delta$. Set $U_1 = N_{r}(B) \cap U$.
Since $N_{r}(B)$ is contained in the $\delta/2$ neighbourhood of $B$, 
it can be covered by at most $k/10+1$ balls of radius $\delta/2$.
The set $U_2=U \setminus N_{r}(B) \cap U$ can be covered 
by $\frac{99}{100}k$ balls in $\mathcal{B}$.
By inductive assumption there exists 
a monotone sweepout of $U_i$, $i=1,2$, by cycles of length
$\leq 500 \log(\frac{99}{100}k+1) \delta$.
By Lemma \ref{concat} there exists a sweepout of
$U$ by cycles of length at most
$500 \log(\frac{99}{100}k) \delta + 2 \delta
< 500 \log(k) \delta $.
This completes the proof of equation \ref{log(k)}.

By definition of $\mathcal{B}$,
balls with the same centers and $1/3$ 
of the radius are disjoint. In particular,
the sum of their volumes is bounded above by $\mu(M)$.
It follows that $k\leq 12 \frac{\mu(M)}{\delta^2}$.
We conclude that 
$l(z_t) \leq C \log(1 / \delta^2 ) \delta$ as desired.
\end{proof}


We can now prove Proposition \ref{small covering sweepout}.
Let $\epsilon< 0.001 \sqrt{\mu(U)}$ 
be a small number
and choose $\delta(\epsilon)>0$ as in Lemma
\ref{small volume sweepout}.
We will prove that
for any subset $U' \subset U$ with piecewise smooth boundary 
there exists a monotone sweepout of 
$U'$ by cycles of length
$\leq 489 \sqrt{\mu(U')}$. 

The proof proceeds by induction on
$n = \log_{\frac{43}{44}}(\frac{\mu(U')}{\delta^2})$
and is reminiscent of arguments in \cite{LNR}.
When $\mu(U') \leq \delta^2$ we are done
by Lemma \ref{small volume sweepout}.
Assume the result to be true for all 
subsets of $\mu$- 
volume $\leq (\frac{44}{43})^{n-1} \delta^2$
and consider $U' \subset U$ with
$(\frac{44}{43})^{n-1}<\frac{\mu(U')}{\delta^2} \leq (\frac{44}{43})^{n}$.

Let $r$ be the smallest radius, such that
$\mu(B^0(x,r) \cap U') \geq \frac{\mu(U')}{44}$
for some $x \in M$.
By Lemma \ref{T} the intersection of the annulus
$B^0(x,3/2 r)\setminus B^0(x,r)$ with $U'$ can be covered by at most $42$ balls
$B^0(x_j,r)$. For each $j$ we have
$\mu (B^0(x_j,r) \cap U') \leq  \frac{\mu(U')}{44}$
since $B^0(x,r)$ has maximal $\mu$-volume for a ball of this radius.
It follows that the total $\mu$-volume of the set
$A=(B^0(x,3/2 r) \setminus B^0(x,r)) \cap U'$
is bounded by $\frac{42}{44} \mu(U')$. 
By Lemma \ref{length-area} we can find a relative
 cycle
$\gamma \subset A $ of length $\leq 2 \frac{\sqrt{\mu_0(U')}}{r} \sqrt{\mu(U')}$
separating $U'$ into two regions
each having $\mu$ volume less or equal to
$\frac{43}{44}\mu(U')$. Denote these two regions
by $U_1$ and $U_2$.

Now we derive a bound for the length of $\gamma$
that does not depend on $r$.
Since $U'$ can be covered by at most $40$ elements of $\mathcal{T}$
its $\mu_0$-volume is bounded by $40 \times 0.55
= 22$ (recall that $0.55$ is the maximal area of
an element in $\mathcal{T}$).
Hence, if $r>1.68$ we obtain that 
$l(\gamma) \leq 5.58 \sqrt{\mu(U')}$.

On the other hand, suppose $r \leq 1.68$.
In this case we can directly compute
(using a formula for the area of a disc in a space of 
constant curvautre $-1$, $0$ or $1$)
$\frac{2 \sqrt{\mu_0(A)}}{r} \leq 5.57$.

By inductive assumption both $U_1$ and $U_2$
admit a monotone sweepout with the desired length bound.
By Lemma \ref{concat} there exists a monotone 
sweepout of $U'$ by cycles of length
$\leq 489 \sqrt{\frac{43}{44}\mu(U')} +
5.58 \sqrt{\mu(U')} + \epsilon \leq
489 \sqrt{\mu(U')}$.

This concludes the proof of Proposition \ref{small covering sweepout}.

\section{Good covering of $M$} \label{sec: covering}

\begin{proposition} \label{nice covering}
Consider a surface $M$ and let $U \subset M$
be an open subset with piecewise smooth boundary
and suppose that it can be covered by $m$ elements of $\mathcal{T}$. 
Let $k$ be given.
Then there exists a collection $\mathcal{U}=\{U_i \}$ of at most $m+\max\{m,43 k\}$ 
sets, such that $\bigcup U_i$ covers $U$,
$\mu(U_i \cap U) \leq \frac{\mu(U)}{k}$,
each $U_i$ is contained in at most $40$
elements of $\mathcal{T}$ and
$l (int(U) \cap \bigcup \partial U_i)  \leq 
(94.6 \sqrt{m} + 36.6 \sqrt{\max\{m,43 k\}})
\sqrt{\mu(U)}$.
\end{proposition}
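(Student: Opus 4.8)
The plan is to construct $\mathcal U$ in two stages. \emph{Stage 1} produces a partition of $U$ into at most $m$ sets, each contained in the $h_0$-neighbourhood $N^0_{1/2}(T_j)$ of one of the polygons covering $U$, hence in at most $40$ elements of $\mathcal T$, with total cutting length $\lesssim\sqrt m\,\sqrt{\mu(U)}$. \emph{Stage 2} refines each of these sets into pieces of $\mu$-area at most $\mu(U)/k$ by repeatedly carving off balls, using the length--area argument from the proof of Proposition~\ref{small covering sweepout}.

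\emph{Stage 1.} Fix $T_1,\dots,T_m\in\mathcal T$ with $U\subset\bigcup_j T_j$. For each $j$ I would apply Lemma~\ref{length-area} with $r=1/2$ to (a small open thickening of) $T_j\cap U$ inside $U$ to get an open set $U_j'$ with $T_j\cap U\subset U_j'\subset U\cap N^0_{1/2}(T_j)$ and
\[ l(\partial U_j'\cap U)\ \le\ 2\sqrt{\mu_0(A_j\cap U)}\;\sqrt{\mu(A_j\cap U)},\qquad A_j:=N^0_{1/2}(T_j)\setminus(T_j\cap U). \]
By property~(3) of Proposition~\ref{hyp covering}, $N^0_{1/2}(T_j)$ lies in at most $40$ elements of $\mathcal T$, so each $U_j'$ does as well and $\mu_0(A_j\cap U)\le 40\cdot 0.55=22$; the same property shows every point of $U$ lies in at most $40$ of the annuli $A_j$ (a polygon within $h_0$-distance $1/2$ of the polygon containing the point is one of the $\le 40$ polygons covering the $1/2$-neighbourhood of that polygon), so $\sum_j\mu(A_j\cap U)\le 40\,\mu(U)$. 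Replacing $U_j'$ by the disjointified sets $W_j=(U_j'\setminus\bigcup_{i<j}U_i')\cap U$ gives a partition of $U$ into at most $m$ sets, each in $\le 40$ elements of $\mathcal T$, with $U\cap\bigcup_j\partial W_j\subset\bigcup_j(\partial U_j'\cap U)$, whence by Cauchy--Schwarz
\[ l\Big(U\cap\bigcup_j\partial W_j\Big)\ \le\ 2\sqrt{22}\sum_j\sqrt{\mu(A_j\cap U)}\ \le\ 2\sqrt{22}\,\sqrt m\,\sqrt{40\,\mu(U)}\ \le\ 94.6\,\sqrt m\,\sqrt{\mu(U)}. \]

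\emph{Stage 2.} Let $s=\mu(U)/k$. For each $W_j$ with $\mu(W_j)>s$ I would carve off pieces of $\mu$-area between $s/43$ and $s$ one at a time: given the current remainder $W\subset W_j$, let $r$ be the smallest radius with $\sup_x\mu(B^0(x,r)\cap W)=s/43$, pick an achieving ball $B^0(x_0,r)$, cover $(B^0(x_0,\tfrac32 r)\setminus B^0(x_0,r))\cap W$ by $\le 42$ balls of radius $r$ via Lemma~\ref{T}(2) --- each of $\mu$-mass $\le s/43$, so the annulus has $\mu$-mass $\le\tfrac{42}{43}s$ --- and apply Lemma~\ref{length-area} with radius $r/2$ to $B^0(x_0,r)\cap W$ inside $W$ to split $W$, by a cut $\gamma$ of length $\le\frac{2\sqrt{\mu_0(A)}}{r}\sqrt{\mu(A)}$ (with $A$ the annulus just covered), into a piece $W'\supset B^0(x_0,r)\cap W$ with $s/43\le\mu(W')\le s$ and the remainder $W\setminus W'$; then set $W'$ aside and continue with $W\setminus W'$, stopping when the remainder has $\mu$-area $\le s$. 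As in Proposition~\ref{small covering sweepout}, splitting into the cases $r>1.68$ and $r\le1.68$ and using the area formula for discs of constant curvature $-1,0,1$ gives $\frac{2\sqrt{\mu_0(A)}}{r}\le 5.58$, so $l(\gamma)\le 5.58\sqrt{\tfrac{42}{43}s}$. Every piece produced is a subset of some $W_j$, hence still contained in $\le 40$ elements of $\mathcal T$, and has $\mu$-area $\le s=\mu(U)/k$.

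\emph{Counting and total length.} The pieces carved off in Stage 2 are pairwise disjoint subsets of $U$ of $\mu$-area $\ge s/43$, so there are at most $43k$ of them; together with the at most $m$ final remainders this gives $\#\mathcal U\le m+43k\le m+\max\{m,43k\}$. The total Stage-2 cutting length is at most $43k\cdot 5.58\sqrt{\tfrac{42}{43}s}=5.58\sqrt{42\cdot 43}\,\sqrt k\,\sqrt{\mu(U)}<36.6\,\sqrt{43k}\,\sqrt{\mu(U)}\le 36.6\,\sqrt{\max\{m,43k\}}\,\sqrt{\mu(U)}$; since $l\big(\mathrm{int}(U)\cap\bigcup\partial U_i\big)$ is bounded by the sum of the lengths of all cuts from both stages, adding the Stage-1 estimate yields the claimed bound.

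\emph{Main obstacle.} The constants are essentially tight, so the two delicate points are (i) squeezing the multiplicity bound $\sum_j\mu(A_j\cap U)\le 40\,\mu(U)$, and the $\le 40$-polygon containment of each $U_j'$, out of property~(3) of Buser's tessellation; and (ii) calibrating the target area $s/43$ in Stage 2 so that both the number of carved pieces stays $\le 43k$ and --- using the $42$-ball covering in Lemma~\ref{T}(2) --- the per-cut length stays small enough that the Stage-2 total fits under $36.6\,\sqrt{\max\{m,43k\}}\,\sqrt{\mu(U)}$. Throughout one must also verify that every set produced remains in $\le 40$ elements of $\mathcal T$, which is automatic since this property passes to subsets.
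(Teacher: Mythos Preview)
Your proof is correct and follows the same two-stage architecture as the paper: first partition $U$ into at most $m$ pieces each contained in $\le 40$ polygons of $\mathcal T$, then refine each piece by carving off balls until every piece has area $\le\mu(U)/k$. Stage~2 is essentially identical to the paper's (the paper targets area $\mu(U)/N$ with $N=\max\{m,43k\}$ rather than your $\mu(U)/(43k)$, but the bookkeeping comes out the same).

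Stage~1 differs in an interesting way. The paper proceeds \emph{greedily}: at each step it picks the polygon $T_\ell$ of largest remaining $\mu$-mass, so that the $\mu$-mass of the $1/2$-annulus around it is bounded by $39\,\mu(T_\ell\cap U)$; this gives a per-piece cut of length $\le 58\sqrt{\mu(V_i)}$, and Cauchy--Schwarz on the disjoint $V_i$'s yields $58\sqrt m\sqrt{\mu(U)}$. You instead process all $T_j$ in parallel and replace the greedy step by the multiplicity observation that each point of $U$ lies in at most $40$ of the annuli $A_j$ (indeed at most $39$, since the polygon containing the point is excluded), so $\sum_j\mu(A_j\cap U)\le 40\,\mu(U)$; Cauchy--Schwarz then gives $2\sqrt{22}\sqrt{40}\sqrt m\sqrt{\mu(U)}\approx 59.3\sqrt m\sqrt{\mu(U)}$. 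Your argument is slightly cleaner in that it avoids the iterative max-selection, at the cost of needing the multiplicity bound; both give constants well under the stated $94.6$ (the paper's $94.6$ arises because part of its Stage-2 contribution, $36.6\,m/\sqrt N\le 36.6\sqrt m$, is absorbed into the $\sqrt m$ term).
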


In the application of this Proposition
to the proof of Theorem \ref{main} we will take $U=M$.

\begin{proof}

Step 1. First we construct a covering of $U$
by sets $V_1,...,V_m$, such that each $V_i$ is contained
in at most $40$ polygons of $\mathcal{T}$,
and the union of their boundaries satisfies
a certain length bound.
The $\mu$-volume of each $V_i$, however, can be 
equal to anything between $0$ and $\mu(U)$.

Let $\mathcal{T}' \subset \mathcal{T}$ be the set of $m$
polygons that cover $U$ and let $T_l \in \mathcal{T}'$ be
such that $\mu(T_l\cap U) \geq \mu(T\cap U)$
for all $T \in \mathcal{T}'$.
By Proposition \ref{hyp covering} there are at most
$39$ polygons neighbouring $T_l$.
The intersection of each of them with $U$
has $\mu$-volume
less than or equal to $\mu(T_l\cap U)$.
By the length-area argument Lemma \ref{length-area}
we can find set $T'$ in the $1/2-$neighbourhood of $T_l$, $T_l\subset T' \subset N_{1/2}(T_l)$, such that
$l(\partial T' \cap U) \leq 2 \sqrt{39*0.55}  \sqrt{39} \sqrt{\mu(T_l\cap U)} < 58 \sqrt{\mu(T_l\cap U)}$.
We set $T'=V_1$. 
We now apply the same construction to select a set 
$V_2 \subset U \setminus V_1$, such that $V_2$ can be covered
by at most $40$ polygons in $\mathcal{T}'$
and $l(\partial V_2 \cap int(U \setminus V_1)) \leq 58 \sqrt{\mu(V_2)}$.
Each time we remove $V_i$ the number of polygons necessary 
to cover the remaining part of $U$ decreases by $1$.
Hence, we will be done after at most $m$ steps.
Since $V_i$ have disjoint interiors we have $\sum \mu(V_i) = \mu(U)$.
By Cauchy-Schwartz inequality the total length 
$l(int(U) \cap \bigcup \partial V_i)\leq 58 \sum \sqrt{\mu(V_i)}
\leq 58 \sqrt{m} \sqrt{\mu(U)}$.

Step 2.
Let $N= \max \{m,43k \}$.
We subdivide each of $V_i$ into a collection of
subsets $\mathcal{U}_i = \{U^i _j \}$, such that 
each $U^i_j$ has $\mu$-volume at most $\frac{43 \mu(U)}{N}$.
Let $k_i$ be the smallest integer larger than or equal to $N \mu(V_i)/\mu(U)$.
Observe that $\sum k_i \leq N+m$.

If $k_i = 1$ we set $\mathcal{U}_i = \{V_i \}$.
Suppose $k_i >1$. Let $B^0(x,r)$ be a ball with the property 
that $\mu(B^0(x,r) \cap V_i) = \frac{\mu(U)}{N}$
and $\mu(B^0(y,r) \cap V_i) \leq \mu(B^0(x,r) \cap V_i)$
for any $y \in M$. 
Since $V_i$ can be covered by at most $40$ polygons,
by Lemma \ref{T} we have that $B^0(x,3/2r)\cap V_i$ can be covered by
at most $43$ balls $B^0$ of radius $r$. 
It follows that $\mu$-volume of $B^0(x,3/2r)\cap V_i$
is at most $\frac{43 \mu(U)}{N} \leq \frac{\mu(U)}{k}$.

As in the proof of Proposition \ref{small covering sweepout}
we can bound $\mu_0$-volume of the annulus
$(B^0(x,3/2r)\setminus B^0(x,r)) \cap V_i$.
We separately consider the case when $r$ is small ($r\leq 1.68$),
and use comparison with the constant curvature space,
and the case when $r$ is large ($r> 1.68$) and
use upper bound on the area of $40$ polygons.
By Lemma \ref{length-area} we conclude that
there exists a set $U_1^i \supset B^0(x,r) \cap V_i$
of volume at most $\frac{43 \mu(U)}{N}$ and with
$l(int(V_i) \cap \partial U_1^i) \leq 5.58 \sqrt{\frac{43 \mu(U)}{N}}$.
Similarly, for each $j$ we can find subsets $U_j^i$ with disjoint interiors,
$\mu$-volume between $\frac{\mu(U)}{N}$ and $\frac{\mu(U)}{43N}$
and $l(\partial U_j^i \cap int(V_i \setminus (U_1^i \cup ... \cup U_{j-1}^i))) \leq 5.58 \sqrt{\frac{43 \mu(U)}{N}}$.
Observe that  $\mathcal{U}_i = \{U^i _j \}$ has at most $k_i$ elements.

We can now estimate the total length of the union of the boundaries
$L = l(int(U) \cap \bigcup_{i,j} \partial U_j ^i) 
\leq 58 \sqrt{m} \sqrt{\mu(U)}
+\sum k_i * 5.58 \sqrt{\frac{43 \mu(U)}{N}}$.
The second term is bounded by 
$36.6(\frac{m}{\sqrt{N}}+\sqrt{N})\sqrt{\mu(U)}$.
We conclude that the total length is bounded by
$(94.6 \sqrt{m} + 36.6 \sqrt{N})\sqrt{\mu(U)}$. 
\end{proof}

\section{Proof of Theorem \ref{main}}

Now we can prove Theorem \ref{main}.
Let $\mathcal{T}$ be a tessellation of $M$
by (at most) $\max\{1, 67g \}$ polygons as in Lemma \ref{T}.

By Proposition \ref{nice covering} we can cover $M$
by a collection of sets $U_i$ each of $\mu$-volume
at most $\mu(M)/k$ and contained in at most $40$
polygons of $\mathcal{T}$. The length
of the union of the boundaries of sets $U_i$
is bounded above 
by 
$(94.6 \sqrt{67 g}
   +36.6 \sqrt{\max \{67 g,43k \}}) \sqrt{\mu(M)}$.
Let $N$ denote the number
of sets in this covering.

First we construct a monotone 1-sweepout $z_t$ of $M$.
By Proposition \ref{small covering sweepout}
for each $U_i$ there exists a monotone sweepout of 
$U_i$ by cycles $z^i_t$ of length at most $489 \sqrt{\frac{\mu(M)}{k}}$.
For $j/N \leq t \leq (j+1)/N$ we set
$z_t = z^j_{Nt-j} + \sum_{i=1} ^{j-1} z^i_{1}$.
This defines a monotone sweepout of $M$
with the property that each cycle can be written as a sum
of chains
$z_t = c^1_t + c^2 _t$, where 
 $c^1_t$ has length at most $489 \sqrt{\frac{\mu(M)}{k}}$ and
 $c^2_t$ is contained in $\bigcup \partial U_i$.

Consider truncated symmetric product $TP^k(S^1)$,
i.e. all expressions of the form $\sum_{i=1} ^k a_i t_i$,
where $a_i \in \mathbb{Z}_2$ and $t_i \in S^1$.
For any $1$-sweepout $z_t$ the family of cycles 
$\{\sum_{i=1} ^k a_i z_{t_i}\}_{\sum_{i=1} ^k a_i t_i \in TP^p(S^1)}$
is a $k$-sweepout of $M$ (see \cite{Guth2008}, \cite{Glynn-AdeyLiokumovich}).

We estimate the mass of each cycle
$$   l(\sum_{i=1} ^k a_i z_{t_i})
\leq k \max_{t} \{l(c^1_t) \} + l(\bigcup \partial U_i) $$
$$ \leq (489 \sqrt{k} + 94.6 \sqrt{67 g}
   +36.6 \sqrt{\max \{67 g,43k \}}) \sqrt{\mu(M)}$$
   
In particular, $ l(\sum_{i=1} ^k a_i z_{t_i})
\leq 1600 \max \{\sqrt{k},\sqrt{g} \} \sqrt{\mu(M)}$.
This concludes the proof of Theorem \ref{main}.


\bibliographystyle{abbrv}
\bibliography{bibliography}

\end{document}